\newtheoremstyle{theorem}
{10pt} 
{10pt} 
{\sl} 
{\parindent} 
{\bf} 
{. } 
{ } 
{} 
\theoremstyle{theorem}
\newtheorem{theorem}{Theorem}
\newtheorem{corollary}[theorem]{Corollary}
\newtheorem{lemma}[theorem]{Lemma}
\newtheorem{exam}[theorem]{Example}
\newtheoremstyle{defi}
{10pt} 
{10pt} 
{\rm} 
{\parindent} 
{\bf} 
{. } 
{ } 
{} 
\theoremstyle{defi}
\newtheorem{definition}[theorem]{Definition}
\begin{document}

\title{\Large{\textbf{Some Inequalities for Nilpotent Multipliers of Finite Groups }}}
\author{\textbf{B. Mashayekhy, F. Mohammadzadeh, and A. Hokmabadi}\\
Department of Mathematics,\\ Center of Excellence in Analysis on Algebraic Structures,\\ Ferdowsi University of Mashhad,\\
P. O. Box 1159-91775, Mashhad, Iran.\\
mashaf@math.um.ac.ir\\[2pt]
fa-mo26@stu-mail.um.ac.ir\\
az-ho13@stu-mail.um.ac.ir}
\date{ }
\maketitle

\begin{abstract}
In this paper we present some inequalities for the order, the
exponent and the number of generators of the $c$-nilpotent
multiplier (the Baer invariant with respect to the variety of
nilpotent groups of class at most $c \geq 1$) of a finite group
and specially of a finite $p$-group. Our results generalize some
previous related results of M.R. Jones and M.R.R. Moghaddam. Also,
we
show that our results improve some of the previous inequalities.
\end{abstract}
{\textit AMS Subject Classification:} 20C25; 20D15; 20E10; 20F12.\\
{\textit Key Words and Phrases:} Nilpotent multiplier; Finite
$p$-group; Inequality.

\section{Introduction and Motivation}

 Let $ G $ be a group with a presentation $F/R $, where $ F $ is a
free group. Then the Baer invariant of $G$ with respect to the
variety $ {\mathcal V}$, denoted by ${\mathcal V} M(G)$, is
defined to be
$${\mathcal V} M(G)=\frac{R \cap V(F)}{[R V^* F]},$$
where $V(F)$ is the verbal subgroup of $ F $  and $$ [R
V^{*}F]=\langle v(f_1,...,f_{i-1},f_{i}r,f_{i+1},...,f_n)
 v(f_1,...,f_{i},...,f_n)^{-1} |$$ $$ \ \ \  r\in R, f_i\in F , v
\in V, 1\leq i\leq n, n \in N \rangle.$$ One can see that the Bear
invariant of a group $ G $  is always abelian and independent of
the choice of the presentation of  $ G $. In particular, if
$\mathcal{V}$ is the variety of abelian groups, $\mathcal{A}$,
then the Baer invariant of the group $ G $  will be $$ \frac{R
\cap F'}{[R, F]},$$which is isomorphic to the Schur multiplier of
$ G $ , denoted by $ M(G) $. Also, if $\mathcal{V}$ is the variety
of nilpotent groups of class at most $c \geq 1$, ${\mathcal
N}_{c}$, then the Baer invariant of the group $ G $  will be
$$ {\mathcal
N}_{c}M(G)=\frac{R \cap \gamma_{c+1}(F)}{[R,\ _cF]}.$$ We also
call it the  $ c $-nilpotent  multiplier of $ G $, and denote it
by $ M ^{(c)} (G) $ (see [8,10] for further details).

\begin{definition}
A variety $\mathcal{V}$ is said to be a \textit{Schur-Baer
variety} if  for any group $G$ for which the marginal factor
group $G/V^*(G)$ is finite, then the verbal subgroup $V(G)$ is
also finite and $|V(G)|$ divides a power of $|G/V^*(G)|$. Schur
[8] proved that the variety of abelian groups, $\mathcal{A}$, is a
Schur-Baer variety. Also, Baer [1] proved that if $u$ and $v$
have Schur-Baer property, then the variety defined by the word
$[u,v]$ has the above property.
\end{definition}

The following theorem gives a very important property of
Schur-Baer varieties.

\begin{theorem}
([10]) The following conditions on the variety $\mathcal{V}$ are
equivalent: \\
(i) $\mathcal{V}$ is a Schur-Baer variety.\\
(ii) For every finite group $ G $, its Baer invariant,
$\mathcal{V}$$M(G)$, is of order dividing a power of $|G|$.
\end{theorem}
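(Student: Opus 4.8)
The plan is to prove the two implications separately, the engine in both directions being the observation that the bracket $[RV^*F]$ is built precisely so that it is contained in $R\cap V(F)$ — hence $\mathcal{V}M(G)=(R\cap V(F))/[RV^*F]$ embeds in $V(F)/[RV^*F]$ — and so that the image of $R$ in $F/[RV^*F]$ is marginal. For $(i)\Rightarrow(ii)$: given a finite group $G=F/R$, I would set $W=F/[RV^*F]$. One first checks the two inclusions $[RV^*F]\subseteq R$ and $[RV^*F]\subseteq V(F)$, both read off the shape of the generators, so that $\mathcal{V}M(G)$ is a subgroup of $V(W)=V(F)[RV^*F]/[RV^*F]$, which equals $V(F)/[RV^*F]$ since $[RV^*F]\subseteq V(F)$. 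Next, for $r\in R$ and any $f_1,\dots,f_n$ the element $v(f_1,\dots,f_ir,\dots,f_n)v(f_1,\dots,f_n)^{-1}$ is by definition a generator of $[RV^*F]$, hence trivial in $W$; this says exactly that $r[RV^*F]$ is marginal in $W$, so $R/[RV^*F]\subseteq V^*(W)$ and $W/V^*(W)$ is a quotient of $W/(R/[RV^*F])\cong F/R=G$, thus finite of order dividing $|G|$. Now apply the Schur--Baer hypothesis to the (possibly infinite) group $W$: $V(W)$ is finite and $|V(W)|$ divides a power of $|W/V^*(W)|$, hence a power of $|G|$; since $|\mathcal{V}M(G)|$ divides $|V(W)|$, this gives $(ii)$.

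For $(ii)\Rightarrow(i)$: let $H$ be any group with $Q:=H/V^*(H)$ finite, $|Q|=n$, and pick a free presentation $\varphi\colon F\to H$ with kernel $R$; put $S=\varphi^{-1}(V^*(H))$, so $F/S\cong Q$ and $R\subseteq S$. I would record three bookkeeping facts. First, $\varphi(V(F))=V(H)$, and $\varphi(S\cap V(F))=V^*(H)\cap V(H)$ (the latter using $R\subseteq S$). Second, the restriction of $H\to H/V^*(H)$ to $V(H)$ has kernel $V^*(H)\cap V(H)$ and image $V(Q)$ (since $V$ commutes with quotients), so $V(H)/(V^*(H)\cap V(H))\cong V(Q)$. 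Third, $[SV^*F]\subseteq R\cap V(F)$: each generator already lies in $V(F)$, and applying $\varphi$ together with the marginality of $\varphi(S)=V^*(H)$ shows it lies in $\ker\varphi=R$. From the third fact there is a natural surjection $\mathcal{V}M(Q)=(S\cap V(F))/[SV^*F]\to(S\cap V(F))/(R\cap V(F))\cong V^*(H)\cap V(H)$, so $|V^*(H)\cap V(H)|$ divides $|\mathcal{V}M(Q)|$, which by $(ii)$ applied to the finite group $Q$ divides a power of $n$. Since also $|V(Q)|$ divides $n$, the second fact gives $|V(H)|=|V(Q)|\cdot|V^*(H)\cap V(H)|$ finite and dividing a power of $n=|H/V^*(H)|$, which is the Schur--Baer property.

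The step I expect to be the main obstacle is the bookkeeping in $(ii)\Rightarrow(i)$: proving $\varphi(S\cap V(F))=V^*(H)\cap V(H)$ and $[SV^*F]\subseteq R\cap V(F)$ cleanly — a modular-law computation combined with a careful one-coordinate-at-a-time use of the marginal property — together with the fact that $H$ need not be finitely generated, so $F$ has to be taken of possibly infinite rank and one must invoke the presentation-independence of the Baer invariant to compute $\mathcal{V}M(Q)$ from this (non-minimal) presentation of $Q$. Once these are in place the remainder is formal manipulation of verbal and marginal subgroups and of the inclusions designed into the bracket $[\,\cdot\,V^*\,\cdot\,]$.
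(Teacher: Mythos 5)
The paper does not prove this statement at all: Theorem 2 is quoted from Leedham-Green and McKay [10], so there is no in-paper argument to compare against. Your proposal is, as far as I can check, a correct and complete proof, and it is essentially the classical argument. For $(i)\Rightarrow(ii)$ the key construction $W=F/[RV^*F]$ works exactly as you say: the generator shape gives $[RV^*F]\subseteq R\cap V(F)$, the defining relations of $[RV^*F]$ make $R/[RV^*F]$ marginal in $W$, so $|W/V^*(W)|$ divides $|G|$, and the Schur--Baer property applied to the (possibly infinite) group $W$ bounds $|V(W)|=|V(F)/[RV^*F]|$, hence the subgroup $\mathcal{V}M(G)=(R\cap V(F))/[RV^*F]$, by a power of $|G|$. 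For $(ii)\Rightarrow(i)$ your three bookkeeping facts are each sound: $\varphi(S\cap V(F))=V^*(H)\cap V(H)$ follows from $\varphi(V(F))=V(H)$ together with $S=\varphi^{-1}(V^*(H))$; marginality of $V^*(H)$ pushes each generator of $[SV^*F]$ into $\ker\varphi$, giving $[SV^*F]\subseteq R\cap V(F)$ and hence the surjection $\mathcal{V}M(Q)\twoheadrightarrow (S\cap V(F))/(R\cap V(F))\cong V^*(H)\cap V(H)$; and $V(H)/(V(H)\cap V^*(H))\cong V(Q)$, whose order divides $n=|Q|$, so $|V(H)|$ divides a power of $n$ once $(ii)$ is applied to the finite group $Q$ (note, as you do, that $(ii)$ includes finiteness of $\mathcal{V}M(Q)$, and that presentation-independence lets you use the possibly infinitely generated presentation $F/S$ of $Q$). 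The only point worth making explicit in a write-up is the elementary divisibility step $a\mid n$, $b\mid n^k$ $\Rightarrow ab\mid n^{k+1}$ used at the end; otherwise nothing is missing.
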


\begin{definition}
([4]) The basic commutators on letters $x_{1}, x_{2}, ...,x_{n},
...$ are defined
as follows: \\
(i) The letters $x_{1}, x_{2}, ...,x_{n}, ...$ are basic
commutators of weight one, ordered by setting $x_{i} < x_{j}$ if
$i<j$.\\
(ii) If basic commutators $c_{i}$ of weight $w(c_{i})< k$ are
defined and ordered, then define basic commutators of weight $ k $
by the following rules. $[c_{i}, c_{j}]$ is a basic commutator of
weight $ k $  if and only if\\
1. $w(c_{i}) + w(c_{j})=k ;$\\
2. $c_{i} > c_{j};$\\
3. If $c_{i}=[c_{s}, c_{t}]$, then $c_{j} \geq c_{t}$.\\
Then we will continue the order by setting $c \geq c_{i}$,
whenever $w(c) \geq w(c_{i}),$ fixing any order among those of
weight $ k $, and finally numbering them in order.
\end{definition}

\begin{theorem}
(P. Hall [4]) Let $F$ be the free group on $\{x_{1}, x_{2},
...,x_{d}\}$, then for all $1\leq i \leq n$,
$$ \frac{\gamma_{n}(F)}{\gamma_{n+i}(F)} $$ is the  free abelian
group freely generated by the basic commutators of weights $ n,
n+1,..., n+i-1 $  on the letters $\{x_{1}, x_{2}, ...,x_{d}\}$.
\end{theorem}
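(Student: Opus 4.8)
The statement is the classical basis theorem for the lower central factors of a free group, and the plan is to prove it in three stages: show that each consecutive factor $\gamma_k(F)/\gamma_{k+1}(F)$ is abelian and spanned by the basic commutators of weight $k$; show that these spanning elements are $\mathbf{Z}$-independent; and finally glue the factors for $k=n,\dots,n+i-1$ together into $\gamma_n(F)/\gamma_{n+i}(F)$.

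For the first stage I would invoke the standard commutator identities $[a,bc]=[a,c]\,[a,b]^{c}$, $[ab,c]=[a,c]^{b}\,[b,c]$ and the Hall--Witt identity, and run P. Hall's \emph{collection process}. Since $\gamma_k(F)$ is generated by the left-normed commutators of weight $\ge k$ in the generators $x_1,\dots,x_d$, repeated application of these identities rewrites any such commutator, modulo $\gamma_{k+1}(F)$, as a product of basic commutators of weight exactly $k$, while the same identities show $\gamma_k(F)/\gamma_{k+1}(F)$ is abelian. Hence this factor is a finitely generated abelian group spanned by the (finitely many) basic commutators of weight $k$.

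The essential point is independence, and here I would use the Magnus embedding $\theta\colon F\hookrightarrow U$, where $U$ is the group of units of the ring $\mathbf{Z}\langle\langle X_1,\dots,X_d\rangle\rangle$ of formal power series in non-commuting indeterminates, given by $x_j\mapsto 1+X_j$. This map is injective, it sends $\gamma_k(F)$ into $1+I^{k}$ where $I$ is the ideal of series with no term of degree $<1$, and it induces homomorphisms $\gamma_k(F)/\gamma_{k+1}(F)\to I^{k}/I^{k+1}$ which together identify the associated graded Lie ring $\bigoplus_{k\ge1}\gamma_k(F)/\gamma_{k+1}(F)$ with the Lie subring $L=\bigoplus_k L_k$ of the free associative ring generated by $X_1,\dots,X_d$ --- that is, with the free Lie ring on $d$ generators. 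Under this identification a basic commutator of weight $k$ maps to the corresponding element of the Hall basis of $L_k$. Since Hall's basis theorem for free Lie rings says these elements form a $\mathbf{Z}$-basis of $L_k$ (of rank $\tfrac1k\sum_{m\mid k}\mu(m)d^{k/m}$ by Witt's formula, $\mu$ the M\"obius function), transporting back shows the basic commutators of weight $k$ form a free abelian basis of $\gamma_k(F)/\gamma_{k+1}(F)$. The main obstacle is precisely proving that $\theta$ induces an \emph{isomorphism} (not merely a surjection) onto the free Lie ring in each degree, i.e.\ that no extra relation among basic commutators appears in $F$; this is exactly why the power-series embedding, rather than pure commutator manipulation, is needed.

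Finally, to obtain $\gamma_n(F)/\gamma_{n+i}(F)$ I would induct on $i$, the case $i=1$ being the previous stage. For the inductive step the short exact sequence
$$1\longrightarrow \gamma_{n+i-1}(F)/\gamma_{n+i}(F)\longrightarrow \gamma_n(F)/\gamma_{n+i}(F)\longrightarrow \gamma_n(F)/\gamma_{n+i-1}(F)\longrightarrow 1$$
has free abelian kernel (freely generated by the basic commutators of weight $n+i-1$) and free abelian quotient (freely generated, by the inductive hypothesis, by the basic commutators of weights $n,\dots,n+i-2$), so the middle term is free abelian; lifting the given basis of the quotient and adjoining a basis of the kernel yields a free generating set consisting exactly of the basic commutators of weights $n,n+1,\dots,n+i-1$, which is the assertion.
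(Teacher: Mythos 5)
The paper does not prove this statement at all: it is quoted as P.\ Hall's classical basis theorem and attributed to the reference [4] (M.\ Hall, \emph{The Theory of Groups}), so there is no internal proof to compare with. Your outline follows the standard route of that reference --- the collection process to show that the weight-$k$ basic commutators span $\gamma_k(F)/\gamma_{k+1}(F)$, and the Magnus embedding $x_j\mapsto 1+X_j$ into the power-series ring, identifying the associated graded object with the free Lie ring and the basic commutators with the Hall basis, to get independence. As a sketch of that argument it is sound, and you correctly flag that the only genuinely delicate point is that the Magnus map induces an isomorphism onto the free Lie ring in each degree rather than a mere surjection.

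There is, however, one genuine gap, in the final gluing step. From the short exact sequence
$$1\longrightarrow \gamma_{n+i-1}(F)/\gamma_{n+i}(F)\longrightarrow \gamma_n(F)/\gamma_{n+i}(F)\longrightarrow \gamma_n(F)/\gamma_{n+i-1}(F)\longrightarrow 1$$
you conclude that ``the middle term is free abelian'' because kernel and quotient are free abelian. That inference is false for group extensions in general (the integral Heisenberg group is an extension of $\mathbf{Z}^{2}$ by $\mathbf{Z}$ and is not abelian): you must first know that $\gamma_n(F)/\gamma_{n+i}(F)$ is abelian, and this is exactly where the hypothesis $i\leq n$ --- which your argument never uses --- enters, via $[\gamma_n(F),\gamma_n(F)]\subseteq \gamma_{2n}(F)\subseteq \gamma_{n+i}(F)$. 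Once abelianness is in hand, the extension is one of abelian groups with free (hence projective) abelian quotient, so it splits, and lifting a basis of the quotient and adjoining a basis of the kernel does give the asserted free basis consisting of the basic commutators of weights $n,\dots,n+i-1$. With that one sentence added, your induction on $i$ is correct.
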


\begin{theorem}
(Witt formula [4]) The number of basic commutators of weight $n$
on $ d $  generators is given by the following formula, $$
\chi_{n}(d)= \frac{1}{n}\sum_{m|n}\mu(m) d^{\frac{n}{m}},$$ where
$\mu(m)$ is the Mobious function, which defined to be
 $$\mu(m)=\left\{\begin{array}{ll}
 1& ; m=1,\\
 0& ; m=p_1^{\alpha_1}...p_k^{\alpha_k} \ \ \ , \exists \alpha_{i} > 1\\
(-1)^s& ; m=p_1...p_s, \end{array}\right.$$ where the $p_i$ are
distinct prime numbers.
\end{theorem}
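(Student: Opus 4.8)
The plan is to convert the enumeration of basic commutators into a computation of the ranks of the lower central factors of a free group, and then read the closed formula off a generating-function identity.

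First, by Theorem~4 (P.~Hall), if $F$ is the free group of rank $d$ then $\gamma_n(F)/\gamma_{n+1}(F)$ is free abelian, with a free basis consisting of the basic commutators of weight $n$ on the $d$ free generators; hence $\chi_n(d)=\mathrm{rank}\,\bigl(\gamma_n(F)/\gamma_{n+1}(F)\bigr)$. So it suffices to prove that the integers $r_n:=\mathrm{rank}\,\bigl(\gamma_n(F)/\gamma_{n+1}(F)\bigr)$ satisfy the divisor-sum identity $\sum_{n\mid N}n\,r_n=d^N$ for every $N\ge 1$, after which the stated formula follows by M\"obius inversion.

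Second, to obtain that identity I would use the Magnus embedding. Let $A=\bigoplus_{n\ge 0}A_n$ be the free associative algebra over $\mathbf{Q}$ on non-commuting indeterminates $X_1,\dots,X_d$, graded by total degree, so $\dim_{\mathbf{Q}}A_n=d^n$. The assignment $x_i\mapsto 1+X_i$ embeds $F$ into the group of units of the degree-completion of $A$, sends $\gamma_n(F)$ into $1+I^n$ (where $I=(X_1,\dots,X_d)$), and induces (Magnus--Witt) an identification of $\bigl(\bigoplus_n\gamma_n(F)/\gamma_{n+1}(F)\bigr)\otimes\mathbf{Q}$ with the free Lie algebra $L=\bigoplus_n L_n$ generated by $X_1,\dots,X_d$ inside $A$, so that $\dim_{\mathbf{Q}}L_n=r_n$. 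By the Poincar\'e--Birkhoff--Witt theorem, $A$ is the universal enveloping algebra of $L$; consequently the ordered products of elements of a fixed homogeneous basis of $L$ form a homogeneous basis of $A$. Comparing Hilbert series in a variable $t$ (grading by total degree) then gives
$$\frac{1}{1-dt}\;=\;\sum_{n\ge 0}d^n t^n\;=\;\prod_{n\ge 1}\frac{1}{(1-t^n)^{r_n}}.$$
Taking logarithms of the outer two expressions and expanding, $\sum_{m\ge 1}\dfrac{d^m t^m}{m}=\sum_{n\ge 1}r_n\sum_{k\ge 1}\dfrac{t^{nk}}{k}$; comparing the coefficients of $t^N$ yields $\dfrac{d^N}{N}=\sum_{n\mid N}\dfrac{r_n}{N/n}$, i.e. $d^N=\sum_{n\mid N}n\,r_n$. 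M\"obius inversion now gives $N r_N=\sum_{m\mid N}\mu(m)\,d^{N/m}$, and since $r_N=\chi_N(d)$ this is exactly the asserted formula.

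I expect the real obstacle to be the middle step: proving that the graded Lie ring attached to the lower central series of a free group is itself free, and invoking the Poincar\'e--Birkhoff--Witt theorem to get the product formula for the Hilbert series; everything afterwards is formal power-series bookkeeping. An alternative route that avoids this machinery is purely combinatorial: exhibit a weight-preserving bijection between the basic commutators of weight $n$ on $d$ letters and the Lyndon words of length $n$ over a $d$-letter alphabet (bracketing a Lyndon word recursively along its standard factorization), and then observe that each of the $d^N$ words of length $N$ is uniquely a power of a primitive word, which gives $d^N=\sum_{n\mid N}n\cdot\#\{\text{Lyndon words of length } n\}$ and hence the formula again by M\"obius inversion. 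I would present the generating-function argument as the main line and remark on the Lyndon-word version.
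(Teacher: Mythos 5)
The paper does not prove this statement at all: Theorem 5 is quoted as a classical result with a citation to Hall's book [4], so there is no internal argument to compare yours against. Judged on its own, your main line is a correct, standard proof (the Witt--Magnus route): Theorem 4, as quoted in the paper with $i=1$, converts the count $\chi_n(d)$ into the rank $r_n$ of $\gamma_n(F)/\gamma_{n+1}(F)$; the Magnus embedding $x_i\mapsto 1+X_i$ together with the Magnus--Witt theorem identifies $\bigl(\bigoplus_n\gamma_n(F)/\gamma_{n+1}(F)\bigr)\otimes\mathbf{Q}$ with the free Lie algebra $L\subseteq A$; PBW yields the Hilbert-series identity $(1-dt)^{-1}=\prod_{n\ge 1}(1-t^n)^{-r_n}$; and your logarithm and M\"obius-inversion bookkeeping correctly extracts $Nr_N=\sum_{m\mid N}\mu(m)\,d^{N/m}$, i.e.\ the asserted formula. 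Two caveats. First, as you yourself flag, the imported identification of the graded Lie ring of the free group with the free Lie algebra carries essentially the full content of the theorem; invoking it is legitimate for a textbook fact, but your argument is then an appeal to that standard development rather than an independent derivation, and it is heavier machinery than the elementary counting available in the cited source. Second, in your Lyndon-word alternative, the advertised ``weight-preserving bijection'' with basic commutators in the sense of Definition 3 is not the routine step you suggest: recursive bracketing of Lyndon words produces the Chen--Fox--Lyndon basis, which is in general a different Hall family from the one in Definition 3, so the clean justification that the two sets are equinumerous is again that both index bases of $L_n$ (or Viennot's characterization of Hall sets); presented as a remark this is fine, but it does not really avoid the Lie-algebra input. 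With the generating-function argument as the main line, your proposal is correct.
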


 M.R. Jones in a series of three papers [5,6,7] studied on the order, the exponent and
the number of generators of the Schur multiplier of finite groups,
specially finite $p$-groups and presented some interesting
inequalities about them. Also M.R.R. Moghaddam [13,14] generalized
some of his results. The following are some of them which we deal
with in this article.

\begin{theorem}
(M.R. Jones [5]) Let $ G $ be a $p$-group of order $p^{n}$ with
center of exponent $p^{k}$. Then $|G'||M(G)|$ is no more than
$p^{\frac{1}{2}(n-k)(n+k-1)}$. In particular
$$|G'||M(G)| \leq p^{\frac{1}{2}n(n-1)}.$$
\end{theorem}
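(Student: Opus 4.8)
The plan is to reduce the inequality to a statement about one invariant and then argue inductively on $n$. Fix a free presentation $G=F/R$ with $F$ free of rank $d=d(G)$, the minimal number of generators of $G$. Since $G'\cong F'/(R\cap F')$ while $M(G)=(R\cap F')/[R,F]$, the two orders telescope to
\[
|G'|\,|M(G)|=\Bigl|\frac{F'}{[R,F]}\Bigr|=|(G^{*})'|,
\]
where $G^{*}$ is any covering group of $G$; and since $\tfrac12(n-k)(n+k-1)=\binom{n}{2}-\binom{k}{2}$, the target is $|F'/[R,F]|\le p^{\binom{n}{2}-\binom{k}{2}}$.

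The inductive tool is the standard five-term sequence extended by the Ganea term: for a central subgroup $N\le Z(G)$ of order $p$,
\[
N\otimes G^{ab}\longrightarrow M(G)\longrightarrow M(G/N)\longrightarrow N\cap G'\longrightarrow 0
\]
is exact, which together with $|G'|=|(G/N)'|\,|N\cap G'|$ gives the master inequality
\[
|G'|\,|M(G)|\ \le\ |(G/N)'|\,|M(G/N)|\cdot|N\otimes G^{ab}|,\qquad |N\otimes G^{ab}|=|G/\Phi(G)|=p^{\,d(G)}.
\]
Running this by induction on $n$ with $N$ an arbitrary central subgroup of order $p$, using $d(G)\le n-1$ unless $G$ is elementary abelian (where $G'=1$ and $|M(G)|=p^{\binom{n}{2}}$ directly) and the identity $\binom{n-1}{2}+(n-1)=\binom{n}{2}$, yields the unconditional bound $|G'||M(G)|\le p^{\binom{n}{2}}$; this is exactly the ``in particular'' clause.

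For the sharp bound I would treat the abelian case by hand and reduce the rest. If $G$ is abelian, then $G\cong\prod_iC_{p^{a_i}}$ with $a_1\ge a_2\ge\cdots$, $a_1=k$ and $\sum_ia_i=n$, so $|M(G)|=p^{\sum_i(i-1)a_i}$, and the claim follows from the elementary chain $\sum_i(i-1)a_i=\sum_{i<j}a_j\le\sum_{i<j}a_ia_j\le\binom{\sum_ia_i}{2}-\binom{a_1}{2}$. If $G$ is non-abelian, fix $z\in Z(G)$ of order $p^k$ and apply the master inequality $k$ times along the chain $G=G/\langle z^{p^{k}}\rangle\to G/\langle z^{p^{k-1}}\rangle\to\cdots\to G/\langle z\rangle$, each arrow collapsing a central subgroup of order $p$; since $\langle z^{p^{i}}\rangle\le\Phi(G)$ for $i\ge1$, every source in the chain has the same number of generators $d(G)$, so
\[
|G'|\,|M(G)|\ \le\ p^{\,k\,d(G)}\cdot\bigl|(G/\langle z\rangle)'\bigr|\,\bigl|M(G/\langle z\rangle)\bigr|\ \le\ p^{\,k\,d(G)+\binom{n-k}{2}},
\]
using the unconditional bound on $G/\langle z\rangle$ (of order $p^{\,n-k}$). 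By $\binom{n}{2}-\binom{n-k}{2}-\binom{k}{2}=k(n-k)$ this is $\le p^{\binom{n}{2}-\binom{k}{2}}$ as soon as $d(G)\le n-k$, and $d(G)\le n-k+1$ always because $\langle z^{p}\rangle\le\Phi(G)$ has order $p^{k-1}$.

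The only remaining case — and the one I expect to be the crux — is $G$ non-abelian with $d(G)=n-k+1$, which forces $\Phi(G)=\langle z^{p}\rangle$. Then $G'\le\langle z^{p}\rangle\le Z(G)$, so $G$ has class at most $2$; as $g^{p}\in\Phi(G)\le Z(G)$ for every $g$, every commutator has order dividing $p$, so $G'$ is elementary abelian, and being contained in the cyclic group $\langle z^{p}\rangle$ it has order exactly $p$ — thus $G'=\langle z^{p^{k-1}}\rangle$ (so $k\ge2$) and $G/G'\cong C_{p^{k-1}}\times(C_p)^{\,n-k}$. I would finish by showing the Ganea map $\lambda\colon G'\otimes G^{ab}\to M(G)$ is zero: with a lift $z^{*}$ of $z$ in $G^{*}$ and $c^{*}=(z^{*})^{p^{k-1}}$ one has $\lambda(c\otimes\bar g)=[c^{*},g^{*}]=[z^{*},g^{*}]^{p^{k-1}}$, a $p^{k-1}$-th power of an element of the image of the Ganea map $\langle z\rangle\otimes G^{ab}\to M(G)$; that image is a quotient of $\langle z\rangle\otimes G^{ab}$, whose exponent is $p^{\min(k,\exp G^{ab})}=p^{k-1}$, so $\lambda=0$. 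Hence $M(G)\hookrightarrow M(G/G')$ with cokernel $C_p$, and
\[
|G'|\,|M(G)|=|M(G/G')|=\bigl|M\bigl(C_{p^{k-1}}\times(C_p)^{\,n-k}\bigr)\bigr|=p^{\binom{n-k+1}{2}}\le p^{\binom{n}{2}-\binom{k}{2}},
\]
since $\binom{n-k+1}{2}=\binom{n-k}{2}+(n-k)$ and $k\ge1$. Apart from pinning down the structure of this residual family and verifying $\lambda=0$, everything is bookkeeping with the Ganea sequence and the two displayed binomial identities.
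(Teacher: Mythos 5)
The paper does not actually prove this statement: Theorem 6 is quoted from Jones's 1972 note [5] as background, so there is no internal proof to compare against. Judged on its own, your argument is correct and complete in outline, and it is interesting that it is essentially a $c=1$ shadow of the paper's own machinery: your ``master inequality'' $|G'||M(G)|\le |(G/N)'||M(G/N)|\,|N\otimes G^{ab}|$ is the Ganea-term bound that the paper's Lemma 22 encodes (for $c=1$) as an epimorphism $B\otimes A\to [S,F]/[R,F]$, and your chain bound $p^{kd+\binom{n-k}{2}}$ obtained by collapsing a maximal central cyclic subgroup step by step is exactly the $c=1$ case of the paper's Theorem A, $\chi_2(n-k)+dk$. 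Your extra contribution is recognizing that this only yields Jones's bound $p^{\binom n2-\binom k2}$ when $d(G)\le n-k$, treating abelian groups directly via the known structure of $M$ of an abelian $p$-group, and then disposing of the boundary family $d(G)=n-k+1$ (where $\Phi(G)=\langle z^p\rangle$, $G'=\langle z^{p^{k-1}}\rangle\cong C_p$, class $\le 2$) by showing the Ganea map $G'\otimes G^{ab}\to M(G)$ vanishes; I checked this step and it works, since $[z^*,g^*]$ lies in $M(G)\le Z(G^*)$, so $[(z^*)^{p^{k-1}},g^*]=[z^*,g^*]^{p^{k-1}}$, and the image of $\langle z\rangle\otimes G^{ab}$ has exponent dividing $\exp(G^{ab})=p^{k-1}$ (the well-definedness of the commutator description on the tensor product is the standard Hall--Witt argument, which you should cite or spell out, and the notation $p^{\min(k,\exp G^{ab})}$ should read $\min(p^k,\exp G^{ab})$). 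So your route — Ganea sequence plus induction plus a residual-case analysis — is a legitimate, self-contained alternative to Jones's original argument; it buys a transparent reduction that meshes with the paper's later varietal generalizations, at the cost of the somewhat delicate case split at $d(G)=n-k+1$ that Jones's original approach avoids.
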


\begin{theorem}
(M.R. Jones [6]) Let $ G $ be a finite group and $ K $ any normal
subgroup of it. Set $
H=G/K $, then\\
(i)   $|M(H)|$ divides $|M(G)||G'\cap K|;$\\
(ii)  $exp(M(H))$ divides $exp(M(G))$ $ exp(G' \cap K);$\\
(iii) $d(M(H)) \leq d(M(G)) + d(G' \cap K).$
\end{theorem}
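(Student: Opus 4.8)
The plan is to fix a free presentation of $G$ and reduce all three assertions to elementary facts about finite abelian groups. I would choose a free group $F$ and a normal subgroup $R$ of $F$ with $G\cong F/R$, and let $S$ be the full preimage of $K$ in $F$, so that $R\leq S\leq F$, $K\cong S/R$, and $H\cong F/S$. By the definition of the Schur multiplier, $M(G)=(F'\cap R)/[R,F]$ and $M(H)=(F'\cap S)/[S,F]$. First I would record the containments that make every quotient below legitimate, namely $[R,F]\leq F'\cap R$ and $[S,F]\leq F'\cap S$ (each commutator subgroup lies in $F'$ and in the relevant normal subgroup). Next, since $R\leq S$, the Dedekind modular law gives $F'R\cap S=R(F'\cap S)$, whence
$$ G'\cap K=\frac{F'R}{R}\cap\frac{S}{R}=\frac{R(F'\cap S)}{R}\cong\frac{F'\cap S}{F'\cap R}. $$

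Then I would study the homomorphism $\theta\colon M(G)\to M(H)$ induced by the inclusion $F'\cap R\hookrightarrow F'\cap S$; it is well defined because $[R,F]\leq[S,F]$. Its image $(F'\cap R)[S,F]/[S,F]$ is a homomorphic image of $M(G)$, and its cokernel
$$ \mathrm{coker}\,\theta=\frac{F'\cap S}{(F'\cap R)[S,F]} $$
is a homomorphic image of $(F'\cap S)/(F'\cap R)\cong G'\cap K$, since $F'\cap R\leq (F'\cap R)[S,F]\leq F'\cap S$. Thus inside the finite abelian group $M(H)$ we have the subgroup $B=\mathrm{im}\,\theta$, which is a quotient of $M(G)$, while the factor group $M(H)/B$ is a quotient of $G'\cap K$. (One could equally obtain this from the five-term homology exact sequence $H_2(G)\to H_2(H)\to K/[K,G]\to\cdots$, but the free-presentation argument is self-contained.)

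Finally I would invoke the standard inequalities for a finite abelian group $A$ with a subgroup $B$: $|A|=|B|\,|A/B|$, $\exp(A)$ divides $\exp(B)\exp(A/B)$, and $d(A)\leq d(B)+d(A/B)$. Applying these with $A=M(H)$, and then replacing $B$ and $M(H)/B$ by $M(G)$ and $G'\cap K$ respectively (which dominate them in order, exponent and minimal number of generators), yields (i), (ii) and (iii) at once; finiteness of $M(G)$ and $M(H)$, needed in (i), follows from the Schur--Baer property of $\mathcal{A}$ together with the characterization of Schur--Baer varieties recalled above. The routine steps are checking that $\theta$ is well defined and identifying its image and cokernel; the step I expect to require the most care is the identification $G'\cap K\cong(F'\cap S)/(F'\cap R)$ and, above all, verifying that $\mathrm{coker}\,\theta$ really is a quotient of this group --- that is, keeping the chain $F'\cap R\leq(F'\cap R)[S,F]\leq F'\cap S$ and the modular-law computation exactly straight.
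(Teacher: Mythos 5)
Your argument is correct: the identification $G'\cap K\cong (F'\cap S)/(F'\cap R)$ via the modular law, the well-defined map $\theta\colon (F'\cap R)/[R,F]\to (F'\cap S)/[S,F]$, and the observation that $\mathrm{im}\,\theta$ is a quotient of $M(G)$ while $\mathrm{coker}\,\theta=(F'\cap S)/\bigl((F'\cap R)[S,F]\bigr)$ is a quotient of $G'\cap K$ do yield (i)--(iii) from the elementary order, exponent and generator inequalities for an abelian group with a distinguished subgroup. Note that the paper itself gives no proof of this statement (it is quoted from Jones [6]); your free-presentation (Hopf-formula) argument is the standard one and is exactly the technique, specialized from $[S\,V^*F]/[R\,V^*F]$ to the abelian variety, that underlies Moghaddam's generalization (Theorem 9) and the paper's own proofs of Theorems 24 and 29.
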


\begin{corollary}
(M.R. Jones [6]) Let $G $ be a finite $ d $-generator group of
order $p^{n}$. Then
$$p^{\frac{1}{2}d(d-1)} \leq |G'||M(G)| \leq
p^{\frac{1}{2}n(n-1)}.$$
\end{corollary}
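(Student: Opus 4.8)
The plan is to obtain the upper bound by quoting the theorem of M.R. Jones [5] above, and to prove the lower bound by a computation in a free presentation together with the Hall basis theorem and the Witt formula. The upper bound needs nothing new: by the ``in particular'' part of that theorem, $|G'|\,|M(G)|\le p^{\frac12 n(n-1)}$ holds for \emph{every} $p$-group of order $p^{n}$, with no restriction on the number of generators. For the lower bound, write $d=d(G)$ for the minimal number of generators of $G$ (the intended reading of ``$d$-generator''), choose a free group $F$ of rank $d$ and a normal subgroup $R\le F$ with $G\cong F/R$. From $G'\cong F'/(R\cap F')$, from $M(G)\cong (R\cap F')/[R,F]$, and from $[R,F]\le R\cap F'\le F'$, a product of indices gives $|G'|\,|M(G)|=[F':R\cap F']\,[R\cap F':[R,F]]=[F':[R,F]]$, a finite number since the Schur multiplier of a finite group is finite. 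It therefore suffices to show $[F':[R,F]]\ge p^{\frac12 d(d-1)}$.

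The crux is to trap $[R,F]$ inside a subgroup of $F'$ whose index we can compute. Because $d=d(G)$, the images in $G$ of the $d$ free generators of $F$ form a minimal generating set, hence map to an $\mathbf{F}_{p}$-basis of $G/\Phi(G)$; consequently the composite of the natural surjections $F\to G\to G/\Phi(G)$ has kernel exactly $F'F^{p}$, where $F^{p}=\langle x^{p}:x\in F\rangle$, and since this composite factors through $G=F/R$ we get $R\le F'F^{p}$. As $F'$ and $F^{p}$ are normal in $F$,
\[
[R,F]\ \le\ [F'F^{p},F]\ =\ [F',F]\,[F^{p},F]\ =\ \gamma_{3}(F)\,[F^{p},F],
\]
and the commutator congruence $[x^{p},y]\equiv [x,y]^{p}\pmod{\gamma_{3}(F)}$ shows $[F^{p},F]\le (F')^{p}\gamma_{3}(F)$. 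Hence $[R,F]\le (F')^{p}\gamma_{3}(F)$, and so $[F':[R,F]]\ge [F':(F')^{p}\gamma_{3}(F)]$.

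It remains to evaluate this last index. By the theorem of P. Hall [4] the quotient $F'/\gamma_{3}(F)=\gamma_{2}(F)/\gamma_{3}(F)$ is free abelian, and by the Witt formula its rank is $\chi_{2}(d)=\frac12\bigl(\mu(1)d^{2}+\mu(2)d\bigr)=\frac12 d(d-1)$. Under $F'\to F'/\gamma_{3}(F)$ the subgroup $(F')^{p}\gamma_{3}(F)$ maps onto the subgroup of $p$-th powers, so $F'/(F')^{p}\gamma_{3}(F)$ is elementary abelian of rank $\frac12 d(d-1)$, that is, of order $p^{\frac12 d(d-1)}$. Putting the estimates together yields $|G'|\,|M(G)|=[F':[R,F]]\ge p^{\frac12 d(d-1)}$, which is the desired inequality.

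The main obstacle is the commutator bookkeeping in the crux step: deducing $R\le F'F^{p}$ from minimality of the generating set, and verifying $[F^{p},F]\le (F')^{p}\gamma_{3}(F)$ through the congruence $[x^{p},y]\equiv [x,y]^{p}\pmod{\gamma_{3}}$. Everything else is either a direct citation --- Jones [5] for the upper bound, Hall [4] and the Witt formula for the rank count --- or the routine index identity for $|G'|\,|M(G)|$.
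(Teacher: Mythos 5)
Your proof is correct, but it takes a genuinely different route from the one the paper relies on. The paper does not prove Corollary 8 at all --- it is quoted from Jones [6] --- and the proof in the paper's own framework (the same mechanism that yields Corollary 12 from Corollary 10 and Theorem 11) goes through the quotient inequality: apply Theorem 7(i) to $K=\Phi(G)$, so that $H=G/\Phi(G)\cong \mathbf{Z}_p^{(d)}$ and $G'\cap K=G'$, giving that $|M(\mathbf{Z}_p^{(d)})|=p^{\frac12 d(d-1)}$ (Theorem 11 with $c=1$, or the classical Schur computation) divides $|M(G)||G'|$; the upper bound is then Theorem 6, exactly as you cite it. You instead work directly in a rank-$d$ free presentation: the identity $|G'||M(G)|=[F':[R,F]]$, the containment $R\le F'F^p$ from the Burnside basis theorem, the estimate $[R,F]\le (F')^p\gamma_3(F)$ via $[x^p,y]\equiv[x,y]^p \pmod{\gamma_3(F)}$, and the Hall--Witt count $[F':(F')^p\gamma_3(F)]=p^{\chi_2(d)}=p^{\frac12 d(d-1)}$. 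All of these steps are sound (your passage from the congruence on generators $x^p$ to the whole subgroup $[F^p,F]$ deserves one more sentence: modulo the normal subgroup $(F')^p\gamma_3(F)$ every $p$-th power is central, hence so is the subgroup they generate, but this is routine), and you correctly flag that ``$d$-generator'' must mean $d=d(G)$, since otherwise the lower bound fails. Comparing the two: the paper's route buys immediate generality --- Theorem 9/Corollary 10 and Theorem 11 are varietal, so the same argument gives the $c$-nilpotent bound $p^{\chi_{c+1}(d)}$ of Corollary 12 with no extra work --- whereas your argument is self-contained (no appeal to the quotient inequality of Theorem 7), in effect re-deriving the needed special case of the multiplier of an elementary abelian group on the fly from the same Hall basis/Witt input; it would also extend to $c>1$ by replacing $\gamma_3$, $(F')^p$, $\chi_2(d)$ with $\gamma_{c+2}$, $\gamma_{c+1}(F)^p$, $\chi_{c+1}(d)$.
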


In 1981 M.R.R. Moghaddam [13,14] gave varietal generalizations of
Theorem 7 and corollary 8. He presented the following theorem
without the condition of being Schur-Baer on the variety. But this
condition seems to be necessary, because ${\mathcal V}M(G)$ is
finite if and only if variety ${\mathcal V}$ is a Schur-Baer
variety.

\begin{theorem}
(M.R.R. Moghaddam [13]) Let ${\mathcal V}$ be a Schur-Baer variety
and G be a finite group
with a normal subgroup K. Let $H=G/K$, then\\
(i)  $|{\mathcal V}M(H)|$ divides $|{\mathcal V}M(G)||V(G)\cap K|;$\\
(ii) $exp({\mathcal V}M(H))$ divides $exp({\mathcal V}M(G))$ $ exp(V(G) \cap K);$\\
(iii) $d({\mathcal V}M(H))\leq d({\mathcal V}M(G)) + d(V(G) \cap
K)$.
\end{theorem}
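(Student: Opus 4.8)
The plan is to fix a free presentation and push everything down to three elementary facts about subgroups of the free group, exactly paralleling the abelian case (Theorem 7). Write $G=F/R$ with $F$ free, and let $S$ be the normal subgroup of $F$ containing $R$ with $K=S/R$, so that $H=G/K=F/S$. By definition
$$\mathcal{V}M(G)=\frac{R\cap V(F)}{[RV^{*}F]},\qquad \mathcal{V}M(H)=\frac{S\cap V(F)}{[SV^{*}F]}.$$
First I would record the inclusion chain among the relevant subgroups. Reducing a defining generator $v(f_{1},\dots,f_{i}r,\dots,f_{n})\,v(f_{1},\dots,f_{i},\dots,f_{n})^{-1}$ of $[RV^{*}F]$ modulo $R$ shows it lies in $R$, while it is manifestly a product of values of $v$ and so lies in $V(F)$; hence $[RV^{*}F]\le R\cap V(F)$, and likewise $[SV^{*}F]\le S\cap V(F)$. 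Since $R\le S$, every defining generator of $[RV^{*}F]$ is also one of $[SV^{*}F]$, so $[RV^{*}F]\le[SV^{*}F]$. Thus $[RV^{*}F]$, $[SV^{*}F]$, $R\cap V(F)$ and $S\cap V(F)$ all lie in the interval between $[RV^{*}F]$ and $S\cap V(F)$, and all are normal in $F$.

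The second ingredient is the identification
$$V(G)\cap K\ \cong\ \frac{S\cap V(F)}{R\cap V(F)}.$$
Here $V(G)=V(F)R/R$ and $K=S/R$, so $V(G)\cap K=(V(F)R\cap S)/R$; since $R\le S$, Dedekind's modular law gives $V(F)R\cap S=R(V(F)\cap S)$, and therefore $V(G)\cap K=R(V(F)\cap S)/R\cong(V(F)\cap S)/(R\cap V(F))$, using $R\cap S=R$.

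Now put $T=(S\cap V(F))/[RV^{*}F]$. By the inclusion chain and the isomorphism theorems, $\mathcal{V}M(G)$ is a normal subgroup of $T$ with $T/\mathcal{V}M(G)\cong(S\cap V(F))/(R\cap V(F))\cong V(G)\cap K$, so there is a short exact sequence
$$1\longrightarrow\mathcal{V}M(G)\longrightarrow T\longrightarrow V(G)\cap K\longrightarrow 1,$$
and there is a surjection $T\twoheadrightarrow\mathcal{V}M(H)$ with kernel $[SV^{*}F]/[RV^{*}F]$. Since $\mathcal{V}$ is a Schur-Baer variety and $G$ is finite, $\mathcal{V}M(G)$ is finite by Theorem 2, and $V(G)\cap K$ is finite as a subgroup of $G$; hence $T$ is finite, and so is its quotient $\mathcal{V}M(H)$. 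The three conclusions then follow formally: $(i)$ $|\mathcal{V}M(H)|$ divides $|T|=|\mathcal{V}M(G)|\,|V(G)\cap K|$; $(ii)$ for $t\in T$ one has $t^{\exp(V(G)\cap K)}\in\mathcal{V}M(G)$, so $\exp(T)$ divides $\exp(\mathcal{V}M(G))\exp(V(G)\cap K)$, and $\exp(\mathcal{V}M(H))$ divides $\exp(T)$; $(iii)$ lifting a minimal generating set of $V(G)\cap K$ to $T$ and adjoining one for $\mathcal{V}M(G)$ gives $d(T)\le d(\mathcal{V}M(G))+d(V(G)\cap K)$, and $d(\mathcal{V}M(H))\le d(T)$ since $\mathcal{V}M(H)$ is a quotient of $T$.

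I expect the only steps needing genuine care to be the modular-law identification of $V(G)\cap K$ with $(S\cap V(F))/(R\cap V(F))$ and the bookkeeping of the four subgroups of $F$; once these are established, the rest is routine extension arithmetic. It is worth noting, as remarked before the statement, that the Schur-Baer hypothesis is used exactly here: it is what makes $\mathcal{V}M(G)$ (hence $T$ and $\mathcal{V}M(H)$) finite, so that the divisibility assertions in $(i)$ and $(ii)$ are meaningful.
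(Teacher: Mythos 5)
Your proposal is correct, and there is nothing in the paper to compare it against: Theorem~9 is quoted from Moghaddam~[13] without proof, so the paper supplies no internal argument for it. Your route is the natural (and, as far as one can tell, the original) one, and it is consistent with the techniques the paper does use elsewhere: the group $T=(S\cap V(F))/[RV^{*}F]$ you introduce is exactly the object that appears implicitly in the paper's Theorem~14, where the same kernel $[SV^{*}F]/[RV^{*}F]$ occurs, and your counting $|T|=|\mathcal{V}M(G)|\,|V(G)\cap K|=|\mathcal{V}M(H)|\,\bigl|[SV^{*}F]/[RV^{*}F]\bigr|$ is the same bookkeeping the authors perform in their Theorems~24 and~29 with $[R\gamma_{k+1}(F),\ _cF]$ in place of $[SV^{*}F]$. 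All the delicate points are handled: $[RV^{*}F]\le R\cap V(F)$, $[RV^{*}F]\le[SV^{*}F]\le S\cap V(F)$, the Dedekind identification $V(G)\cap K\cong(S\cap V(F))/(R\cap V(F))$, and the use of the Schur--Baer hypothesis (via the paper's Theorem~2) precisely to make $\mathcal{V}M(G)$, hence $T$ and $\mathcal{V}M(H)$, finite so that the divisibility statements are meaningful; note also that your exponent step $t^{\exp(V(G)\cap K)}\in\mathcal{V}M(G)$, $t^{ef}=1$ does not even need $T$ abelian. In short, the proof is complete as written.
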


\begin{corollary}
(M.R.R. Moghaddam [13,14]) Let ${\mathcal V}$ be the variety of
polynilpotent groups of a given class row. Let $ G $ be a finite
$ d $-generator group of order $p^n$. Then
$$|{\mathcal V}M( \textbf{Z}_{p}^{(d)})| \leq |{\mathcal
V}M(G)||V(G)|\leq|{\mathcal V}M(\textbf{Z}_{p}^{(n)})|,$$
 where
$\textbf{Z}_{n}^{(m)}$ denotes the direct sum of $ m $ copies of
$\textbf{Z}_{n}$.
\end{corollary}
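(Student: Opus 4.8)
The plan is to prove the two inequalities separately: the left one is a direct consequence of Theorem 10, while the right one needs an induction on $|G|$ in the spirit of Jones's proofs of Theorems 5 and 8. Write $\mathcal{V}$ for the variety of polynilpotent groups of class row $(c_{1},\dots,c_{t})$, $c_{1}\geq 1$. Since $\mathcal{V}$ is obtained from $\mathcal{A}$ by iterating the commutator construction of Definition 1, it is a Schur--Baer variety, so by Theorem 2 every Baer invariant occurring below is a finite $p$-group and a divisibility of orders gives the desired inequality. The computational device is this: for $G=F/R$ with $F$ free, the exact sequence
\[
1\longrightarrow \mathcal{V}M(G)=\frac{R\cap V(F)}{[RV^{*}F]}\longrightarrow \frac{V(F)}{[RV^{*}F]}\longrightarrow \frac{V(F)}{R\cap V(F)}\cong V(G)\longrightarrow 1
\]
gives $|\mathcal{V}M(G)|\,|V(G)|=|V(F)/[RV^{*}F]|$; and if $H=G/K=F/S$ with $R\subseteq S$, then $[RV^{*}F]\subseteq[SV^{*}F]\subseteq S\cap V(F)$ yields the product refinement of Theorem 10,
\[
|\mathcal{V}M(G)|\,|V(G)|=\bigl|[SV^{*}F]:[RV^{*}F]\bigr|\cdot|\mathcal{V}M(H)|\,|V(H)| .
\]

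For the left inequality take $K=\Phi(G)=G'G^{p}$, so that $H=G/\Phi(G)\cong\textbf{Z}_{p}^{(d)}$ (as $d=d(G)$). Since $c_{1}\geq 1$ we have $V(G)\leq\gamma_{2}(G)=G'\leq\Phi(G)$, hence $V(G)\cap\Phi(G)=V(G)$, and Theorem 10(i) says $|\mathcal{V}M(\textbf{Z}_{p}^{(d)})|$ divides $|\mathcal{V}M(G)|\,|V(G)\cap\Phi(G)|=|\mathcal{V}M(G)|\,|V(G)|$; equivalently, this is the product refinement above with the factor $|[SV^{*}F]:[RV^{*}F]|$ dropped and $V(\textbf{Z}_{p}^{(d)})=1$ used.

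For the right inequality one first records that $|\mathcal{V}M(\textbf{Z}_{p}^{(m)})|$ is an explicit power of $p$: realizing $\textbf{Z}_{p}^{(m)}=F/\gamma_{2}(F)F^{p}$ and reducing $[\,(\gamma_{2}(F)F^{p})\,V^{*}F\,]$ by commutator calculus, $\mathcal{V}M(\textbf{Z}_{p}^{(m)})$ is elementary abelian of rank a prescribed combination of Witt numbers $\chi_{w}(m)$ coming from Theorems 3 and 4 (for $\mathcal{V}=\mathcal{N}_{c}$ this rank is $\chi_{c+1}(m)$). Then one argues by induction on $n$, where $|G|=p^{n}$: if $\Phi(G)=1$ then $G=\textbf{Z}_{p}^{(n)}$ and equality holds; otherwise $d=d(G)<n$, and one picks a central subgroup $K$ of order $p$ (chosen, as in Theorem 5, compatibly with the exponent of $Z(G)$), sets $H=G/K$ of order $p^{n-1}$, and applies the inductive hypothesis to $H$. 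By the product refinement it then suffices to control the transgression index $|[SV^{*}F]:[RV^{*}F]|$; since $K=S/R$ is central of order $p$ we have $[S,F]\subseteq R$, and the assignment $\bar{s}\otimes(\,\cdot\,)\mapsto[s,\,\cdot\,][RV^{*}F]$ presents $[SV^{*}F]/[RV^{*}F]$ as a homomorphic image of $K$ tensored with a fixed homogeneous component of the free polynilpotent group on the $d$ generators of $G$, bounding this index by a power of $p$ depending only on $d$ and on the exponent of $Z(G)$. Summing these bounds along a chief series of $G$ and matching the total against the telescoping product that computes $|\mathcal{V}M(\textbf{Z}_{p}^{(n)})|$ closes the induction.

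The hard part is this last step. Already for the Schur multiplier (Theorem 5) a single central step of order $p$ lying inside $\Phi(G)$ may contribute more than the ``generic'' amount, so the induction cannot be carried by $n$ alone: one needs a polynilpotent analogue of Theorem 5 built on an invariant refined by the exponent $p^{k}$ of $Z(G)$, plus the combinatorial check that the refined per-step contributions telescope to \emph{exactly} $|\mathcal{V}M(\textbf{Z}_{p}^{(n)})|$. A more routine obstacle is that, to carry out the explicit evaluation of $|\mathcal{V}M(\textbf{Z}_{p}^{(m)})|$ for a genuine class row rather than a single class $c$, one must use the analogue of Theorem 3 for the successive free abelian quotients of the free polynilpotent group before invoking Theorem 4.
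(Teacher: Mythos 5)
First, a point of comparison: the paper itself offers no proof of this corollary at all --- it is quoted from Moghaddam [13,14] and then used (with Theorem 11) to get Corollary 12 --- so your proposal has to stand on its own. Its first half does: since the class row has $c_{1}\geq 1$, $V(G)\leq G'\leq \Phi(G)$, so taking $K=\Phi(G)$ and $H=G/\Phi(G)\cong \textbf{Z}_{p}^{(d)}$ in Theorem 9(i) gives that $|\mathcal{V}M(\textbf{Z}_{p}^{(d)})|$ divides $|\mathcal{V}M(G)|\,|V(G)\cap\Phi(G)|=|\mathcal{V}M(G)|\,|V(G)|$, and polynilpotent varieties are Schur--Baer, so all the orders involved are finite. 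That is correct and is surely the intended argument for the left-hand inequality.

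The right-hand inequality, however, is not proved in your write-up, and you say so yourself: after reducing (correctly) to the identity $|\mathcal{V}M(G)|\,|V(G)|=|[SV^{*}F]:[RV^{*}F]|\cdot|\mathcal{V}M(H)|\,|V(H)|$ for a central $K=S/R$ of order $p$, everything hinges on the per-step factor $|[SV^{*}F]:[RV^{*}F]|$ being at most the corresponding increment of $|\mathcal{V}M(\textbf{Z}_{p}^{(m)})|$, and you only assert a bound ``by a power of $p$ depending only on $d$ and $\exp Z(G)$'' before conceding that matching the sum of these bounds against $|\mathcal{V}M(\textbf{Z}_{p}^{(n)})|$ is ``the hard part.'' That concession is precisely where the proof is missing, and the bound your tensor surjection actually yields is quantitatively too weak. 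Indeed, the technique you describe is the one the paper uses for Theorem A, which for a central subgroup of order $p$ and $d=d(H)\leq n-1$ gives, already in the single-class case $\mathcal{V}=\mathcal{N}_{c}$, only $|[S,\ _{c}F]:[R,\ _{c}F]|\leq p^{d(1+d)^{c-1}}$, whereas the increment you must not exceed is $\chi_{c+1}(n)-\chi_{c+1}(n-1)$. For $c=3$ the increment is $n^{3}-\tfrac{3}{2}n^{2}+\tfrac{1}{2}n$ while the bound with $d=n-1$ is $(n-1)n^{2}=n^{3}-n^{2}$, strictly larger for every $n\geq 2$ (e.g.\ $4>3$ when $n=2$); so summing your per-step bounds along a chief series overshoots $\chi_{4}(n)$ and the induction does not close, and for a genuine class row the mismatch is worse. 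What is needed is a sharper per-step estimate, roughly $|[SV^{*}F]:[RV^{*}F]|\leq |\mathcal{V}M(\textbf{Z}_{p}^{(d+1)})|/|\mathcal{V}M(\textbf{Z}_{p}^{(d)})|$, proved for instance by a basic-commutator count (Theorems 3, 4, 11) of which generators of $[SV^{*}F]$ mod $[RV^{*}F]$ can actually involve the new central element; without such an estimate the right-hand inequality remains unestablished.
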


 The following theorem is useful in our investigation.

\begin{theorem}
(B. Mashayekhy and M.R.R. Moghaddam [11]) Let $G \cong
\textbf{Z}_{n_1}\oplus \textbf{Z}_{n_2}\oplus ...\oplus
\textbf{Z}_{n_k}$ be a finite abelian groups, where $n_{i+1}\mid
n_i$ for all $1\leq i\leq k-1$. Then for all $c \geq 1$, the $ c
$-nilpotent multiplier of $ G $ is
$$M^{(c)} (G)= \textbf{Z}_{n_2}^{(\chi _{c+1}(2))} \oplus \textbf{Z}_{n_3}^{(\chi
_{c+1}(3)-\chi _{c+1}(2))} \oplus ... \oplus
\textbf{Z}_{n_k}^{(\chi _{c+1}(k)-\chi _{c+1}(k-1))}.$$
\end{theorem}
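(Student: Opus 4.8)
The plan is to reduce the whole question to a computation in the lower central factors of a free group. Present $G=F/R$ with $F$ free on $x_1,\dots,x_k$ and $R$ the normal closure of $\{x_1^{n_1},\dots,x_k^{n_k}\}$ together with $\gamma_2(F)$. Since $G$ is abelian, $\gamma_2(F)\subseteq R$, so $R\cap\gamma_{c+1}(F)=\gamma_{c+1}(F)$ and
\[
M^{(c)}(G)=\frac{\gamma_{c+1}(F)}{[R,\,_cF]}.
\]
Moreover $[R,\,_cF]\supseteq[\gamma_2(F),\,_cF]=\gamma_{c+2}(F)$, so $\gamma_{c+2}(F)\subseteq[R,\,_cF]\subseteq\gamma_{c+1}(F)$, and therefore $M^{(c)}(G)\cong A/B$ with $A=\gamma_{c+1}(F)/\gamma_{c+2}(F)$ and $B=[R,\,_cF]/\gamma_{c+2}(F)$. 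By Theorem 4, $A$ is free abelian on the $\chi_{c+1}(k)$ basic commutators of weight $c+1$ on $x_1,\dots,x_k$.

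The next step is to describe $B$ inside $A$. Working modulo $\gamma_{c+2}(F)$, a routine commutator calculation (repeatedly using $[ab,f]=[a,f]^{b}[b,f]$ and that a commutator of weight $\ge c+1$ is central modulo $\gamma_{c+2}(F)$) shows that $B$ is generated by the images of the commutators $[x_i^{n_i},x_{l_1},\dots,x_{l_c}]$, $1\le i\le k$, $1\le l_m\le k$. Since the weight-$(c+1)$ commutator map is $\mathbb{Z}$-multilinear modulo $\gamma_{c+2}(F)$, in $A$ (written additively) one has $\overline{[x_i^{n_i},x_{l_1},\dots,x_{l_c}]}=n_i\,\overline{[x_i,x_{l_1},\dots,x_{l_c}]}$. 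Both $A$ and $B$ are graded by multidegree in $x_1,\dots,x_k$ — the stated generators being multihomogeneous — so $A/B=\bigoplus_{\vec d}\,A_{\vec d}/B_{\vec d}$, the sum being over multidegrees $\vec d=(d_1,\dots,d_k)$ with $d_1+\dots+d_k=c+1$.

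The heart of the proof is the analysis of a fixed multidegree $\vec d$; let $x_j$ be the largest-indexed letter actually occurring in $\vec d$ (so $d_j\ge1$). Every generator $n_i\,\overline{[x_i,x_{l_1},\dots,x_{l_c}]}$ of $B_{\vec d}$ has $d_i\ge1$, hence $i\le j$, and then $n_j\mid n_i$ since $n_{t+1}\mid n_t$ for all $t$; this gives $B_{\vec d}\subseteq n_jA_{\vec d}$. Conversely, taking $i=j$ shows $B_{\vec d}\supseteq n_j\cdot\big\langle\,\overline{[x_j,x_{l_1},\dots,x_{l_c}]}:\text{the multidegree is }\vec d\,\big\rangle$, and I expect the main obstacle to be the following lemma on the free Lie ring, which then forces $B_{\vec d}=n_jA_{\vec d}$: \emph{if a generator $x_a$ occurs in $\vec d$ and $w:=d_1+\dots+d_k\ge2$, then the left-normed commutators of multidegree $\vec d$ whose first entry is $x_a$ generate the $\vec d$-homogeneous component of $\gamma_w(F)/\gamma_{w+1}(F)$ as an abelian group.} One proves it by induction on $w$, using the (integral) collection process to write an arbitrary commutator as a $\mathbb{Z}$-combination of left-normed ones and the Jacobi identity in the form $[[p,q],r]=[p,[q,r]]-[q,[p,r]]$ to migrate the fixed letter $x_a$ to the front; the point requiring care is to track that all coefficients remain integers, so one cannot simply argue over $\mathbb{Q}$. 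Granting the lemma, $A_{\vec d}/B_{\vec d}\cong(\mathbb{Z}/n_j\mathbb{Z})^{\operatorname{rank}A_{\vec d}}$.

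It remains to collect terms. Grouping the multidegrees by their largest occurring letter $x_j$, the direct sum of those $A_{\vec d}$ is free abelian of rank equal to the number of basic commutators of weight $c+1$ on $x_1,\dots,x_k$ whose highest letter is $x_j$; by Theorem 4 this number is (basic commutators of weight $c+1$ on $x_1,\dots,x_j$) minus (those on $x_1,\dots,x_{j-1}$), i.e.\ $\chi_{c+1}(j)-\chi_{c+1}(j-1)$ by the Witt formula (Theorem 5). Since $\chi_{c+1}(1)=\frac{1}{c+1}\sum_{m\mid c+1}\mu(m)=0$ for $c\ge1$, the letter $x_1$ contributes nothing, the $j=2$ block is $\mathbb{Z}_{n_2}^{(\chi_{c+1}(2))}$, and for $j\ge3$ it is $\mathbb{Z}_{n_j}^{(\chi_{c+1}(j)-\chi_{c+1}(j-1))}$. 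Summing over $j=2,\dots,k$ yields precisely the asserted decomposition of $M^{(c)}(G)$.
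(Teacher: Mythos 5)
The paper itself contains no proof of this statement: Theorem 11 is quoted from reference [11], so there is no internal argument to measure your proposal against. Judged on its own, your derivation is correct and self-contained, and it takes a different route from the cited source, which obtains the formula essentially from the behaviour of the $c$-nilpotent multiplier under direct sums of cyclic groups and induction on the number of factors. You instead compute $M^{(c)}(G)=\gamma_{c+1}(F)/[R,\ _cF]$ directly from the obvious free presentation, trap $[R,\ _cF]$ between $\gamma_{c+1}(F)$ and $\gamma_{c+2}(F)$, and work in the multidegree-graded free Lie ring $\gamma_{c+1}(F)/\gamma_{c+2}(F)$ supplied by Theorem 4. The reductions are sound: modulo $\gamma_{c+2}(F)$ the subgroup $B$ is generated by the classes $n_i[x_i,x_{l_1},\dots,x_{l_c}]$; homogeneity of these generators splits $A/B$ over multidegrees; the divisibility $n_j\mid n_i$ for $i\le j$ gives $B_{\vec{d}}\subseteq n_jA_{\vec{d}}$; and the count of basic commutators of weight $c+1$ whose highest letter is $x_j$ is indeed $\chi_{c+1}(j)-\chi_{c+1}(j-1)$, with $\chi_{c+1}(1)=0$. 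This buys a proof that needs nothing beyond Hall's basis theorem and Witt's formula, at the cost of one genuine piece of Lie-ring combinatorics.

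That piece is the lemma you flag: for a multidegree $\vec{d}$ of weight $w\ge 2$ containing the letter $x_a$, the left-normed brackets of multidegree $\vec{d}$ beginning with $x_a$ span the $\vec{d}$-component over the integers. The lemma is true and your Jacobi-plus-induction plan can be carried through integrally, but since the whole computation of $B_{\vec{d}}=n_jA_{\vec{d}}$ rests on it, it should be proved rather than sketched. The cleanest completion is to reduce to the classical multilinear case: the multilinear component on letters $y_1,\dots,y_w$ is spanned over the integers by the left-normed brackets beginning with $y_1$ (standard, via the identity $[p,[q,r]]=[[p,q],r]-[[p,r],q]$ and induction on the degree of the right-hand factor, all with integer coefficients). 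Any substitution $y_m\mapsto x_{i_m}$ realizing the multiset of letters of $\vec{d}$ and sending $y_1\mapsto x_a$ is a Lie ring homomorphism that carries this spanning set onto the left-normed brackets of multidegree $\vec{d}$ beginning with $x_a$, and it maps the multilinear component onto the $\vec{d}$-component because every left-normed bracket of multidegree $\vec{d}$ is visibly the image of a multilinear left-normed bracket. With that inserted, your proof is complete.
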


A useful corollary can be obtained by using Corollary 10 and
Theorem 11.

\begin{corollary}
Let $G$ be a finite $d$-generator $p$-group of order $p^n$, then
$$ p^{\chi_{c+1}(d)}\leq |M^{(c)}(G)||\gamma_{c+1}(G)|\leq
p^{\chi_{c+1}(n)}.$$
\end{corollary}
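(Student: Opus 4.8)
The plan is to read the statement off from Corollary 10 applied to the variety $\mathcal{N}_c$, after identifying the two extreme terms with the help of Theorem 11. Observe that $\mathcal{N}_c$, the variety of nilpotent groups of class at most $c$, is the polynilpotent variety whose class row is the single entry $(c)$; for it one has $V(G)=\gamma_{c+1}(G)$ and $\mathcal{V}M(G)=M^{(c)}(G)$. Hence Corollary 10, applied to our finite $d$-generator $p$-group $G$ of order $p^n$, gives at once
$$|M^{(c)}(\textbf{Z}_p^{(d)})|\ \leq\ |M^{(c)}(G)||\gamma_{c+1}(G)|\ \leq\ |M^{(c)}(\textbf{Z}_p^{(n)})|,$$
where the factors $|V(\textbf{Z}_p^{(d)})|$ and $|V(\textbf{Z}_p^{(n)})|$ implicit in Corollary 10 drop out because $\gamma_{c+1}$ is trivial on an abelian group.

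So the only real work is to compute $|M^{(c)}(\textbf{Z}_p^{(k)})|$ for $k=d$ and $k=n$, and for this I would invoke Theorem 11 with $n_1=\cdots=n_k=p$. That theorem presents $M^{(c)}(\textbf{Z}_p^{(k)})$ as an elementary abelian $p$-group, namely
$$M^{(c)}(\textbf{Z}_p^{(k)})\ \cong\ \textbf{Z}_p^{(\chi_{c+1}(2))}\oplus\textbf{Z}_p^{(\chi_{c+1}(3)-\chi_{c+1}(2))}\oplus\cdots\oplus\textbf{Z}_p^{(\chi_{c+1}(k)-\chi_{c+1}(k-1))},$$
whose rank is the telescoping sum $\chi_{c+1}(2)+(\chi_{c+1}(3)-\chi_{c+1}(2))+\cdots+(\chi_{c+1}(k)-\chi_{c+1}(k-1))=\chi_{c+1}(k)$. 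Therefore $|M^{(c)}(\textbf{Z}_p^{(k)})|=p^{\chi_{c+1}(k)}$, and substituting $k=d$ and $k=n$ into the chain above yields
$$p^{\chi_{c+1}(d)}\ \leq\ |M^{(c)}(G)||\gamma_{c+1}(G)|\ \leq\ p^{\chi_{c+1}(n)},$$
as claimed.

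I do not expect a genuine obstacle here: the argument is a direct splicing of Corollary 10 and Theorem 11. The two points I would be careful to state are (a) that $\mathcal{N}_c$ legitimately falls within the hypotheses of Corollary 10 — it is a polynilpotent variety, with the single-row class $(c)$, and is in particular Schur-Baer by the iterated-commutator criterion recalled in Definition 1 — and (b) that the telescoping is exact, which uses only the cancellation of consecutive terms and, if one prefers to begin the sum at $\chi_{c+1}(1)$, the fact that $\chi_{c+1}(1)=0$ since $c+1\geq 2$, immediate from the Witt formula of Theorem 5.
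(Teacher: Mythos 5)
Your proposal is correct and follows exactly the route the paper intends: it obtains Corollary 12 by applying Corollary 10 to the variety ${\mathcal N}_c$ (viewed as the polynilpotent variety with class row $(c)$, so $V(G)=\gamma_{c+1}(G)$) and then computing $|M^{(c)}(\textbf{Z}_p^{(k)})|=p^{\chi_{c+1}(k)}$ from Theorem 11 by the telescoping sum. The extra care you take over $\chi_{c+1}(1)=0$ and the Schur--Baer hypothesis is fine but not needed beyond what the paper already assumes.
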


The next theorem gives some upper bounds in terms of normal
 subgroups and factor groups.

\begin{theorem}
(M.R. Jones [6]) Let $ G $ be a finite group and $K$ be a central
subgroup of $G$. Set
$H=G/K $, then \\
(i)  $|M(G)||G'\cap K|$ divides $|M(H)||M(K)||H \otimes K|;$\\
(ii) $exp(M(G))$ divides $exp(M(H))exp(M(K))exp(H \otimes K);$\\
(iii) $d(M(G))\leq d(M(H))+ d(M(K)) + d(H \otimes K).$
\end{theorem}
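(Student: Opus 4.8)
The plan is to argue directly from a free presentation, in the spirit of Jones's original computations; a conceptual alternative is to read off the same filtration of $M(G)$ from the Lyndon--Hochschild--Serre spectral sequence of the central extension $1\to K\to G\to H\to 1$, but the presentation argument is more elementary and stays within the tools used above. Write $G=F/R$ with $F$ free, and $K=S/R$ with $R\le S\le F$; the hypothesis that $K$ is central in $G$ is exactly $[S,F]\le R$, and $H=G/K=F/S$. Since $[S,F]\le R$ and obviously $[S,F]\le F'$, the subgroup $[S,F]/[R,F]$ lies inside $M(G)=(R\cap F')/[R,F]$, and the inclusion $R\cap F'\hookrightarrow S\cap F'$ induces a map $M(G)\to M(H)=(S\cap F')/[S,F]$. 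First I would establish, by a short diagram chase (using $(R\cap F')[S,F]=R\cap F'$ because $[S,F]\le R\cap F'$, together with the natural isomorphism $(S\cap F')/(R\cap F')\cong K\cap G'$), the exact sequence
$$1\longrightarrow \frac{[S,F]}{[R,F]}\longrightarrow M(G)\longrightarrow M(H)\longrightarrow K\cap G'\longrightarrow 1 .$$
In particular $|M(G)|\,|K\cap G'|=|M(H)|\,\bigl|[S,F]/[R,F]\bigr|$, and $M(G)$ is an extension of a subgroup of $M(H)$ by $[S,F]/[R,F]$.

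It then remains to bound $[S,F]/[R,F]$ in terms of $M(K)$ and $H\otimes K$, and the idea is to filter it through $[S,S][R,F]/[R,F]$ (note $[S,S]\le R$ since $K$ is abelian, so this is a genuine subgroup of $M(G)$). For the bottom factor I would check that $(s_0R,s_1R)\mapsto[s_0,s_1][R,F]$ is a well-defined bilinear \emph{alternating} map $K\times K\to M(G)$ whose image generates $[S,S][R,F]/[R,F]$; it then factors through a surjection of $\wedge^2K$ onto $[S,S][R,F]/[R,F]$, and since $M(K)=\wedge^2K$ for the finite abelian group $K$, this factor is a quotient of $M(K)$. For the top factor $[S,F]/[S,S][R,F]$ I would check that $(fS,sR)\mapsto[s,f]\,[S,S][R,F]$ is a well-defined bilinear map $H\times K\to[S,F]/[S,S][R,F]$ which is onto, so this factor is a quotient of $H\otimes K$.

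Putting this together, $[S,F]/[R,F]$ has a two-step normal series with factors a quotient of $M(K)$ and a quotient of $H\otimes K$; hence its order divides $|M(K)|\,|H\otimes K|$, its exponent divides $\exp(M(K))\exp(H\otimes K)$, and it can be generated by $d(M(K))+d(H\otimes K)$ elements. Feeding this into the exact sequence above — using that orders multiply over extensions, that the exponent of an extension divides the product of the exponents of its end terms, that $d$ is subadditive over extensions, and that a subgroup of a finite abelian group needs no more generators than the group — yields (i), (ii) and (iii) all at once.

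The hard part will be the (routine but fiddly) commutator bookkeeping needed to verify that the two displayed maps are well defined and bilinear modulo the indicated subgroups; this is precisely where the centrality hypothesis $[S,F]\le R$ is used repeatedly, to absorb correction terms of the form $[[S,F],F]$, $[[S,F],S]$, $[R,S]$ and $[R,R]$ into $[R,F]$. Everything else is formal.
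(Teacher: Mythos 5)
Your proof is correct. The paper itself gives no proof of this theorem (it is quoted from Jones [6]), but your argument is the standard one and coincides with the machinery the paper uses for its $c$-nilpotent generalizations: your identity $|M(G)|\,|G'\cap K| = |M(H)|\,\bigl|[S,F]/[R,F]\bigr|$, obtained from the four-term exact sequence, is exactly the abelian-variety case of Theorem 14(i), and your two bilinear maps are the $c=1$ instance of the technique of Lemma 22 and of the filtration used in the proof of Theorem A (splitting $[S,F]/[R,F]$ through $[S,S][R,F]/[R,F]$, with $\wedge^{2}K\cong M(K)$ covering the bottom factor and $H\otimes K$ covering the top). The deferred verifications all go through: the correction terms lie in $[[S,F],F]$, $[[S,F],S]$, $[R,S]$, $[R,R]\le [R,F]$ precisely because $[S,F]\le R$ and $[S,S]\le R$, and the final bookkeeping (multiplicativity of orders, divisibility of exponents, and subadditivity of $d$ over extensions of finite abelian groups, plus $d(I)\le d(M(H))$ for the subgroup $I$) is as you state.
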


The following theorem is a generalization of the above theorem,
for which the condition of Schur-Baer variety seems to be
necessary too.

\begin{theorem}
(M.R.R. Moghaddam [13]) Let ${\mathcal V}$ be a Schur-Baer variety
and $G $ be a finite group with a marginal subgroup $K$. Let $
G=F/R $ be a free
presentation for $G$ and $K=S/R$. Set $H = G/K\cong F/S$. Then\\
(i)  $|{\mathcal V}M(G)||V(G)\cap K|= |{\mathcal V}M(H)||[S V^* F]/[R V^* F]|;$\\
(ii) $exp({\mathcal V}M(G))$ divides $exp({\mathcal V}M(H))exp([S V^* F]/[R V^* F]);$\\
(iii) $d({\mathcal V}M(G)) \leq d({\mathcal V}M(H))+d([S V^* F]/[R
V^* F])$.
\end{theorem}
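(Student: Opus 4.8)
The plan is to carry everything out inside a free presentation $G=F/R$, $K=S/R$ (so $R\le S\le F$), $H=G/K\cong F/S$, and to reduce the whole statement to elementary facts about a short chain of normal subgroups of $F$. Recall that $\mathcal VM(G)=(R\cap V(F))/[RV^{*}F]$ and $\mathcal VM(H)=(S\cap V(F))/[SV^{*}F]$, and note that $[RV^{*}F]$ and $[SV^{*}F]$ are normal in $F$ and contained in $V(F)$ (each defining element is a product of values of words of $\mathcal V$). The decisive observation is that, because $K=S/R$ is a \emph{marginal} subgroup of $G$, one has $[SV^{*}F]\le R$: the condition $S/R\le V^{*}(F/R)$ says precisely that every generator $v(f_{1},\dots,f_{i-1},f_{i}s,f_{i+1},\dots,f_{n})\,v(f_{1},\dots,f_{i},\dots,f_{n})^{-1}$ (with $s\in S$) of $[SV^{*}F]$ lies in $R$. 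Together with $[SV^{*}F]\le V(F)$ and $R\le S$ this yields the chain of normal subgroups of $F$
$$[RV^{*}F]\ \le\ [SV^{*}F]\ \le\ R\cap V(F)\ \le\ S\cap V(F).$$

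I would then identify the remaining quotient in the statement. By the modular law (using $R\le S$), $V(G)\cap K=(V(F)R\cap S)/R=(V(F)\cap S)R/R\cong(S\cap V(F))/(R\cap V(F))$. Writing $P=[RV^{*}F]$, $Q=[SV^{*}F]$, $A=R\cap V(F)$, $T=S\cap V(F)$ (so $P\le Q\le A\le T$, each normal in $F$), the four groups occurring in the theorem are $\mathcal VM(G)=A/P$, $\mathcal VM(H)=T/Q$, $[SV^{*}F]/[RV^{*}F]=Q/P$, and $V(G)\cap K\cong T/A$; moreover all of them are finite, since $\mathcal V$ is Schur-Baer and $G$, hence $H$, is finite, by Theorem 2. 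Part (i) is now immediate from the multiplicativity of order along the chain $P\le Q\le A\le T$: $|\mathcal VM(G)|\,|V(G)\cap K|=|A/P|\,|T/A|=|T/P|=|T/Q|\,|Q/P|=|\mathcal VM(H)|\,|[SV^{*}F]/[RV^{*}F]|$.

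For (ii) and (iii) I would use the short exact sequence $1\to Q/P\to A/P\to A/Q\to 1$ coming from $P\le Q\le A$, in which $A/P=\mathcal VM(G)$, together with the inclusion $A/Q\le T/Q=\mathcal VM(H)$ (valid because $A\le T$). Since the exponent of the middle term of a short exact sequence divides the product of the exponents of the two ends, and the exponent of a subgroup divides that of the ambient group, $\exp(\mathcal VM(G))$ divides $\exp(Q/P)\exp(A/Q)$, which divides $\exp([SV^{*}F]/[RV^{*}F])\,\exp(\mathcal VM(H))$; this gives (ii). For (iii), note that $\mathcal VM(G)$ is abelian, hence so are all the subquotients above, so $d(A/P)\le d(Q/P)+d(A/Q)$ and, since a subgroup of a finitely generated abelian group needs no more generators than the group, $d(A/Q)\le d(\mathcal VM(H))$; combining these gives $d(\mathcal VM(G))\le d(\mathcal VM(H))+d([SV^{*}F]/[RV^{*}F])$.

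The whole argument is bookkeeping once the chain $[RV^{*}F]\le[SV^{*}F]\le R\cap V(F)\le S\cap V(F)$ is in place, so the only step needing genuine care is the inclusion $[SV^{*}F]\le R$, i.e. the faithful translation of ``$K$ is a marginal subgroup of $G$'' into a statement about the presentation $G=F/R$; after that, everything reduces to the third isomorphism theorem, the modular law, and the standard behaviour of order, exponent, and minimal number of generators in short exact sequences.
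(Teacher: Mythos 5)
The paper states this result (its Theorem 14) as a quotation of Moghaddam [13] and supplies no proof of its own, so there is no in-paper argument to compare against; judged on its own terms, your proof is correct. The decisive step — translating the marginality of $K=S/R$ into the inclusion $[S V^* F]\le R$, which together with $[S V^* F]\le V(F)$ and $R\le S$ yields the chain $[R V^* F]\le [S V^* F]\le R\cap V(F)\le S\cap V(F)$ — is exactly the right reduction, and the remaining bookkeeping (the modular-law identification $V(G)\cap K\cong (S\cap V(F))/(R\cap V(F))$, multiplicativity of the order along the chain for (i), the exponent estimate through the extension $1\to Q/P\to A/P\to A/Q\to 1$ with $A/Q\le \mathcal{V}M(H)$ for (ii), and the generator bound using that subgroups of finite abelian groups need no more generators for (iii), with all finiteness supplied by the Schur--Baer hypothesis via Theorem 2) is sound. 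This is, moreover, the same style of subgroup-chain argument inside a free presentation that the paper itself uses to prove its Theorems 24 and 29, so your route is the natural one.
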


\begin{theorem}
(M.R. Jones [6]) Let $G$ be a finite nilpotent group of class $ c
$ and let $ Q_{j} =
G /\gamma_{j}(G) $ for $ 2 \leq j \leq c $. Then\\
(i)  $|G'||M(G)|$  divides $ | M(G/G')| \prod ^{c-1} _{j=1}
|\gamma
_{j+1}(G)\otimes Q_{j+1}  |; $ \\
(ii)  $exp( M(G) ) $ divides   $exp( M(G /G')) \prod ^{c-1} _{j=1}
exp(\gamma
_{j+1} (G) \otimes Q_{j+1}  ) ;$ \\
(iii) $d( M(G) ) \leq  d( M(G / G'))+ \sum ^{c-1} _{j=1} d (\gamma
_{j+1} (G) \otimes Q_{j+1} ) .$
\end{theorem}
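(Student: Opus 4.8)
The plan is to prove (i)--(iii) simultaneously by induction on the nilpotency class $c$, factoring out the last nontrivial term $\gamma_c(G)$ of the lower central series at each step. For $c=1$ the group is abelian, so $G=G/G'$ and the product over $j$ is empty; all three statements then hold with equality, which is the base case. For the inductive step one passes to $\bar G=G/\gamma_c(G)$, which is nilpotent of class $c-1$, and the task is to compare $M(G)$ with $M(\bar G)$.

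Applying Theorem 13 to the central subgroup $K=\gamma_c(G)$ would give $|M(G)|\,|\gamma_c(G)|$ dividing $|M(\bar G)|\,|M(\gamma_c(G))|\,|\bar G\otimes\gamma_c(G)|$, with the analogous statements for $\exp$ and $d$; but the factor $|M(\gamma_c(G))|$ (and likewise for $\exp$ and $d$) does not cancel when the reduction is iterated down the lower central series, so this yields only a weaker bound --- and removing that factor is the main obstacle. The way out is to use that $\gamma_c(G)$ lies in $G'$, not merely in $Z(G)$. For a central subgroup $K\le G'$ the exact sequence
$$(G/G')\otimes K \to M(G) \to M(G/K) \to K \to 1$$
holds (part of the Ganea exact sequence for the central extension; surjectivity of $M(G/K)\to K$ is precisely where $K\le G'$ enters). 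Thus the image of $M(G)\to M(G/K)$ is the kernel of $M(G/K)\to K$, hence has index $|K|$ in $M(G/K)$, while the kernel of $M(G)\to M(G/K)$ is a homomorphic image of $(G/G')\otimes K$; so $M(G)$ is an extension of a subgroup of $M(G/K)$ of index $|K|$ by a quotient of $(G/G')\otimes K$, which gives at once
$$|M(G)|\,|K| \ \text{divides}\ |M(G/K)|\,|(G/G')\otimes K|,$$
together with $\exp(M(G))\mid\exp(M(G/K))\,\exp((G/G')\otimes K)$ and $d(M(G))\le d(M(G/K))+d((G/G')\otimes K)$. This sharpened reduction --- essentially the lemma behind Jones's proof in [6], and also deducible directly from the Hopf formula $M(G)=(R\cap F')/[R,F]$ --- is the one ingredient not already present in the excerpt, and proving it cleanly is where the real work lies.

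Granting the reduction, the rest is bookkeeping. With $K=\gamma_c(G)$ and $\bar G=G/\gamma_c(G)$ one has $\bar G'=G'/\gamma_c(G)$ (so $|\gamma_c(G)|\,|\bar G'|=|G'|$), $\bar G/\bar G'=G/G'$, and for $2\le m\le c-1$ also $\gamma_m(\bar G)=\gamma_m(G)/\gamma_c(G)$ and $\bar G/\gamma_m(\bar G)=Q_m$. Because $\gamma_m(G)\le G'$, the quotient maps $\gamma_m(G)\to\gamma_m(G)/\gamma_c(G)$ and $G/\gamma_m(G)\to G/G'$ induce a surjection $\gamma_m(G)\otimes Q_m\to\gamma_m(\bar G)\otimes(\bar G/\gamma_m(\bar G))$, and likewise $(G/G')\otimes\gamma_c(G)$ is a quotient of $\gamma_c(G)\otimes Q_c$; so every tensor factor appearing in the statement for $\bar G$ is a homomorphic image of the corresponding factor for $G$, whence its order divides, its exponent divides, and its minimal number of generators is at most that of the latter. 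Now apply the sharpened reduction to $(G,\gamma_c(G))$ and the induction hypothesis to $\bar G$: for (i), multiply the two divisibilities and cancel the common positive factor $|M(\bar G)|$ (using $|\gamma_c(G)|\,|\bar G'|=|G'|$) to recover (i) exactly; for (ii) and (iii) simply substitute the inductive bound for $M(\bar G)$ into the reduction bound, where no cancellation is even needed. The only remaining points are the standard facts that $\gamma_c(G)\le Z(G)$ and $\gamma_m(G)/\gamma_c(G)\le(\bar G)'$ for $m\ge2$, which ensure that at each stage the subgroup being factored out is central and contained in the derived subgroup --- exactly the hypothesis that lets the sharpened reduction be reapplied. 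Everything after the reduction lemma should be routine.
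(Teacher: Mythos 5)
Your argument is essentially sound, but it takes a genuinely different route from the one this paper (and its generalization, Theorem B) uses. You induct on the nilpotency class, factoring out $\gamma_c(G)$ at each step, and the engine of the induction is a sharpened five-term/Ganea sequence for a central subgroup $K\leq G'$: $(G/G')\otimes K\to M(G)\to M(G/K)\to K\to 1$, whose surjectivity at $K$ is exactly where $K\leq G'$ enters; from it you correctly extract $|M(G)||K|\mid |M(G/K)||(G/G')\otimes K|$ and the analogous $\exp$ and $d$ bounds, and the bookkeeping with $\bar G=G/\gamma_c(G)$ (identifying $\bar G/\bar G'=G/G'$, $\bar Q_m\cong Q_m$, and each tensor factor for $\bar G$ as a quotient of the one for $G$, then cancelling $|M(\bar G)|$) is valid. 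The paper, by contrast, does not induct at all: in the framework of Theorem 24 one fixes a free presentation $G=F/R$, telescopes $\gamma_{c+1}(F)/[R,{}_cF]$ along the chain of subgroups $[R\gamma_{k+1}(F),{}_cF]$, obtaining an exact equality $|\gamma_{c+1}(G)||M^{(c)}(G)|=|M^{(c)}(G/G')|\prod_k|[R\gamma_{k+1}(F),{}_cF]/[R\gamma_{k+2}(F),{}_cF]|$, and then bounds each factor by $\otimes^{c+1}(\gamma_{k+1}(G),Q_{k+1})$ via the multilinearity argument of Lemma 22; at $c=1$ this yields Jones's Theorem 15 with an equality in part (i), which is slightly sharper information than your divisibility, and the method extends verbatim to the $c$-nilpotent multiplier, which is the whole point of Theorem B. The one soft spot in your write-up is that the ``sharpened reduction'' is asserted rather than proved; it is a standard fact, and you rightly note it follows from the Hopf formula --- indeed, with $K=S/R$ central and $S\leq F'R$ one gets $M(G/K)=(F'\cap S)/[S,F]$, the kernel of $M(G/K)\to K$ equal to $(F'\cap R)/[S,F]$, and $[S,F]/[R,F]$ a quotient of $K\otimes G/G'$ by exactly the paper's Lemma 22 with $c=1$ --- so you should either cite Ganea's sequence explicitly or spend the few lines writing this out, since that deferred lemma is precisely the work the paper's free-presentation argument performs explicitly (and in greater generality).
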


The above theorem has an interesting corollary for which we need
the following definition.

\begin{definition}
 Let $X$ be any group. We
say that $X$ has a special rank $r(X)$ if every subgroup of $X$
may be generated by $r(X)$ elements and there is at least one
subgroup of $X$ that cannot be generated by fewer than $r(X)$
elements. It is easy to see that $r(X)=d(X)$ for every abelian
group $X$. Also if N is a normal subgroup of $X$ then $ r(X)\leq
r(X/N)+ r(N) $.
\end{definition}

\begin{corollary}
If $ G $ is a finite $p$-group of  class $c$ and special rank $r$
then $$d(M(G))\leq \frac{1}{2} r((2c-1)r-1).$$
\end{corollary}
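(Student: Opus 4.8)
The plan is to apply Theorem 18 (Jones's bound for nilpotent groups) with $c$-nilpotent data replaced by the Schur-multiplier data, and then to estimate each factor on the right-hand side using the special rank hypothesis. First I would set $Q_{j}=G/\gamma_{j}(G)$ for $2\le j\le c$ and recall from Theorem 18(iii) that
$$d(M(G))\le d(M(G/G'))+\sum_{j=1}^{c-1}d\bigl(\gamma_{j+1}(G)\otimes Q_{j+1}\bigr).$$
Since $G$ has special rank $r$, every subgroup of $G$ — in particular $G'$, each $\gamma_{j+1}(G)$, and (via the bound $r(G/N)\le r(G)$ from Definition 17) each quotient $Q_{j+1}$ — can be generated by at most $r$ elements. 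For the abelianization $G/G'$, which is an abelian $p$-group of rank at most $r$, the multiplier $M(G/G')$ is (by Theorem 11 with $c=1$, or classically) a quotient of $\mathbf{Z}_p^{(\binom{r}{2})}$, so $d(M(G/G'))\le\binom{r}{2}=\tfrac12 r(r-1)$.

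Next I would bound each tensor factor. For finite abelian-by-anything groups $A,B$ generated by $a$ and $b$ elements respectively, $A\otimes B$ is generated by $ab$ elements; applying this with $A=\gamma_{j+1}(G)$ (so $d\le r$) and $B=Q_{j+1}=G/\gamma_{j+1}(G)$ (so $d\le r$, using $r(Q_{j+1})\le r(G)=r$ by Definition 17), each term $d(\gamma_{j+1}(G)\otimes Q_{j+1})$ is at most $r^{2}$. There are $c-1$ such terms, contributing at most $(c-1)r^{2}$. Combining,
$$d(M(G))\le \tfrac12 r(r-1)+(c-1)r^{2}=\tfrac12\bigl(r^{2}-r+2(c-1)r^{2}\bigr)=\tfrac12\bigl((2c-1)r^{2}-r\bigr)=\tfrac12 r\bigl((2c-1)r-1\bigr),$$
which is the claimed inequality.

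The main obstacle I anticipate is the bound $d(A\otimes B)\le d(A)\,d(B)$ and, relatedly, getting the constant exactly right so that the two contributions telescope into $\tfrac12 r((2c-1)r-1)$ rather than something slightly weaker. The tensor bound itself is routine: if $a_1,\dots,a_s$ generate $A$ and $b_1,\dots,b_t$ generate $B$ then the $st$ elements $a_i\otimes b_j$ generate $A\otimes B$; the only care needed is that these are \emph{abelian} tensor products of finite abelian groups (the groups $\gamma_{j+1}(G)\otimes Q_{j+1}$ appearing in Theorem 18 are nonabelian-group tensor products in general, but here $\gamma_{j+1}(G)$ acts on $Q_{j+1}$ and both are generated by $\le r$ elements, so the same generating-set argument applies to the nonabelian tensor product). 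One should also double-check the edge cases $c=1$ (where the sum is empty and the bound reduces to $d(M(G))\le\tfrac12 r(r-1)$, consistent with the abelian estimate) and the fact that $\gamma_{c+1}(G)=1$ so the $j=c-1$ term involves $\gamma_c(G)\otimes Q_c$, which is still covered by the rank bound.
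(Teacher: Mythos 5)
Your proposal is correct and follows exactly the intended derivation of this corollary: apply part (iii) of the Jones decomposition theorem for nilpotent groups (Theorem 15 in the paper's numbering, not 18), bound $d(M(G/G'))$ by $\binom{r}{2}$ via the known multiplier of an abelian group, and bound each of the $c-1$ tensor terms by $r^{2}$ using $d(A\otimes B)\leq d(A)d(B)$ together with $r(G/N)\leq r(G)$, giving $\tfrac12 r(r-1)+(c-1)r^{2}=\tfrac12 r((2c-1)r-1)$. The paper states the corollary without proof, and your argument (including the correct handling of the tensor terms, which here are ordinary tensor products so the generator count is routine) is precisely the one it relies on.
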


\begin{corollary}
If $G$ is a finite $p$-group of  class $c$  and exponent $p^e$,
then $$exp(M(G))\leq p^{ec}$$.
\end{corollary}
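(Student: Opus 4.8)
Proof proposal for Corollary 19 ($G$ a finite $p$-group of class $c$ and exponent $p^e$ implies $\exp(M(G)) \le p^{ec}$):

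The plan is to induct on the nilpotency class $c$, using Theorem 18(ii) as the engine. For the base case $c = 1$, the group $G$ is abelian of exponent $p^e$, so by Theorem 11 (with $\mathcal{N}_1 = \mathcal{A}$, i.e. the Schur multiplier) $M(G)$ is a direct sum of cyclic groups each of order dividing the second-largest invariant factor $n_2$, which divides $p^e$; hence $\exp(M(G)) \mid p^e = p^{e\cdot 1}$. For the inductive step, assume the result holds for all finite $p$-groups of class less than $c$, and let $G$ have class exactly $c$ and exponent $p^e$. Apply Theorem 18(ii): $\exp(M(G))$ divides $\exp(M(G/G')) \prod_{j=1}^{c-1} \exp(\gamma_{j+1}(G) \otimes Q_{j+1})$, where $Q_{j+1} = G/\gamma_{j+1}(G)$.

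Now I would bound each factor on the right. The group $G/G'$ is abelian and is a quotient of $G$, so its exponent divides $p^e$; by the $c=1$ case, $\exp(M(G/G')) \mid p^e$. For the tensor factors, note that $\gamma_{j+1}(G)$ and $Q_{j+1}$ are both (sections of) $G$, hence of exponent dividing $p^e$; a standard fact about the nonabelian (or here, since $Q_{j+1}$ acts trivially enough — actually $\gamma_{j+1}(G)$ is central modulo $\gamma_{j+2}$, but more simply one uses that for abelianized tensor products) tensor product $A \otimes B$ of groups of exponents dividing $p^a$ and $p^b$ has exponent dividing $p^{\min(a,b)} \mid p^e$. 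So each of the $c-1$ tensor factors contributes exponent dividing $p^e$. Multiplying: $\exp(M(G))$ divides $p^e \cdot (p^e)^{c-1} = p^{ec}$, as desired.

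The main obstacle is making the exponent bound on the tensor products $\gamma_{j+1}(G) \otimes Q_{j+1}$ fully rigorous — one must be careful about which tensor product is meant (the Whitehead/nonabelian tensor square versus the abelian tensor product of abelianizations) and cite or prove that $\exp(A \otimes B) \mid \gcd(\exp A, \exp B)$ in that setting; this is classical but deserves a precise reference. A secondary point worth checking is that the induction hypothesis is genuinely applied only to groups of strictly smaller class: here it is applied to $G/G'$, which has class $1 < c$ (since $c \ge 2$ in the inductive step), so this is fine, and in fact the argument above sidesteps needing the hypothesis for intermediate classes by bounding everything directly in terms of $e$. One should also confirm that Theorem 18 is being invoked with its hypotheses met — $G$ finite nilpotent of class $c$ — which it is. I do not expect any surprises beyond the tensor-exponent lemma.
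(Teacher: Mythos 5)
Your argument is correct and is essentially the intended route: the corollary follows directly from Theorem 15(ii) (which you cite, with a harmless numbering slip, as ``Theorem 18'') together with the abelian case via Theorem 11 applied to $G/G'$ and the standard fact that $\exp(A\otimes B)$ divides $\gcd(\exp A,\exp B)$ for the (abelianized) tensor factors, each bounded by $p^{e}$; this is exactly how the paper proves its generalized version, Corollary 27, of which this statement is the $c$-nilpotent-multiplier-with-$c=1$ special case. The induction on class is superfluous scaffolding, as you yourself observe, since every factor is bounded directly in terms of $p^{e}$.
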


M. R. Jones in 1974 gave an improvement of Theorem 15 as follows.

\begin{theorem}
(M.R. Jones [7]) Let $G $ be a finite nilpotent group of class $ c
\geq 2 $. Let  $Z _{j} = Z
_{j}(G) $ for all $ 1 \leq j \leq c $, then \\
(i)  $| \gamma _{c} (G) | |M(G)|$ divides $ | M (G/\gamma _{c}(G))
||\gamma _{c} (G) \otimes (G / Z _{c-1}(G) )| $;\\
(ii) $exp (M(G))$ divides $ exp( M(G/\gamma _{c}(G)) ) exp (\gamma
_{c}(G)\otimes  (G / Z _{c-1}(G)));$\\
(iii) $d ( M(G))  \leq d(M(G/\gamma _{c}(G)))+ d (\gamma _{c} (G)
\otimes (G / Z _{c-1}(G))).$
\end{theorem}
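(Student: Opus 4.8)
The plan is to obtain the theorem by specialising Theorem~14 to the variety $\mathcal{A}$ of abelian groups, for which $V(G)=G'$, the marginal subgroup $V^{*}(G)$ is the centre $Z(G)$, $[RV^{*}F]=[R,F]$, $\mathcal{A}M(G)=M(G)$, and which is a Schur--Baer variety by Schur's theorem. First I would fix a free presentation $G=F/R$ and put $K=\gamma_{c}(G)=S/R$. Since $G$ is nilpotent of class $c$ we have $[\gamma_{c}(G),G]=\gamma_{c+1}(G)=1$, so $K\le Z(G)$ is a marginal subgroup for $\mathcal{A}$; and $c\ge 2$ gives $\gamma_{c}(G)\le\gamma_{2}(G)=G'$, so $G'\cap K=\gamma_{c}(G)$. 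With $H=G/\gamma_{c}(G)\cong F/S$, Theorem~14 then yields $|M(G)||\gamma_{c}(G)|=|M(H)|\,|[S,F]/[R,F]|$, that $exp(M(G))$ divides $exp(M(H))\,exp([S,F]/[R,F])$, and $d(M(G))\le d(M(H))+d([S,F]/[R,F])$.

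Everything now reduces to estimating $[S,F]/[R,F]$, and the core of the argument is to produce an epimorphism $\gamma_{c}(G)\otimes(G/Z_{c-1}(G))\to[S,F]/[R,F]$. Because $\gamma_{c+1}(F)\le R$ and, $K$ being central, $[S,F]\le R$, a routine commutator computation shows that $(k,\bar g)\mapsto[\tilde k,\tilde g][R,F]$ is a well-defined $\mathbf{Z}$-bilinear map $\gamma_{c}(G)\times(G/Z_{c-1}(G))\to[S,F]/[R,F]$, where $\tilde k\in S$ and $\tilde g\in F$ are arbitrary preimages of $k$ and of a representative of $\bar g$: changing $\tilde k$ or $\tilde g$ by an element of $R$ alters $[\tilde k,\tilde g]$ only by an element of $[R,F]$, and bilinearity follows from the standard commutator identities modulo $[R,F]$. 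Since $S=\gamma_{c}(F)R$, the image of this map is $[\gamma_{c}(F),F][R,F]/[R,F]=[S,F]/[R,F]$, so the induced map on the tensor product is onto.

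The one genuinely non-formal point --- and the step I expect to be the main obstacle --- is that this map kills $Z_{c-1}(G)$ in the second variable, i.e.\ that $[\gamma_{c}(F),\tilde g]\le[R,F]$ whenever $\tilde g R\in Z_{c-1}(G)$. I would handle this by first proving the commutator lemma that, in any group $L$, for every normal subgroup $N$ and every $n\ge 1$ one has $[N,\gamma_{n}(L)]\le[N,\underbrace{L,\dots,L}_{n}]$, by induction on $n$ via the three subgroups lemma, applying the inductive hypothesis both to $N$ and to $[N,L]$. Taking $L=F$, $n=c$, $N=\langle\tilde g\rangle^{F}$ gives $[\gamma_{c}(F),\tilde g]\le[N,\gamma_{c}(F)]\le[N,{}_{c}F]=[[N,{}_{c-1}F],F]$; and $\tilde g R\in Z_{c-1}(G)$ is exactly the statement that $[N,{}_{c-1}F]\le R$, whence $[\gamma_{c}(F),\tilde g]\le[R,F]$. (The same computation also gives independence of the chosen representative of $\bar g$, completing the verification above.) Once the epimorphism is available, $|[S,F]/[R,F]|$ divides $|\gamma_{c}(G)\otimes(G/Z_{c-1}(G))|$, $exp([S,F]/[R,F])$ divides $exp(\gamma_{c}(G)\otimes(G/Z_{c-1}(G)))$, and $d([S,F]/[R,F])\le d(\gamma_{c}(G)\otimes(G/Z_{c-1}(G)))$; substituting these into the three relations coming from Theorem~14 gives (i), (ii) and (iii).
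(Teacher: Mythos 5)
Your proposal is correct, and although the paper only quotes this theorem from Jones [7] without proof, your argument is essentially the same as the paper's proof of its own generalization (Theorem C together with Theorem 29) specialized to the Schur multiplier: reduce to the factor $[S,F]/[R,F]$ and then exhibit it as an epimorphic image of $\gamma_{c}(G)\otimes\bigl(G/Z_{c-1}(G)\bigr)$, with well-definedness resting on the containment $[M,\gamma_{i}(N)]\leq[M,{}_{i}N]$ (the paper's Lemma 21). The only cosmetic difference is that you obtain the first reduction by specializing Moghaddam's Theorem 14 to the abelian variety, whereas the paper's Theorem 29 performs the corresponding free-presentation computation directly.
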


\begin{corollary} Let $G$ be a finite $p$-group of  class $c \geq 2$ and
exponent $ p^e$. Then $$exp(M(G))\leq p^{e(c-1)}.$$
\end{corollary}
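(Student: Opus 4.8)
The plan is to apply Theorem 22 in the special case where $G$ is a finite $p$-group of class $c \geq 2$ and exponent $p^e$, and then iterate. First I would invoke part (ii) of Theorem 22, which tells us that $\exp(M(G))$ divides $\exp(M(G/\gamma_c(G)))\,\exp(\gamma_c(G)\otimes(G/Z_{c-1}(G)))$. The quotient $G/\gamma_c(G)$ is a $p$-group of class $c-1$ and exponent dividing $p^e$, so by induction on the class $c$ (with base case $c=1$, where $G$ is abelian and $M(G)$ has exponent dividing $\exp(G)=p^e$, or more carefully the base case $c=2$ given directly) we may assume $\exp(M(G/\gamma_c(G)))$ divides $p^{e(c-2)}$ when $c\geq 3$, and equals a divisor of $p^{e}$ when $c=2$.

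The second ingredient is a bound on $\exp(\gamma_c(G)\otimes(G/Z_{c-1}(G)))$. For any two finite abelian groups $A$ and $B$, the exponent of $A\otimes_{\mathbb Z} B$ divides $\gcd(\exp A,\exp B)$, hence divides $\exp A$; here $\gamma_c(G)$ is central (since $G$ has class $c$) and abelian, with exponent dividing $p^e$, so $\exp(\gamma_c(G)\otimes(G/Z_{c-1}(G)))$ divides $p^e$. Combining the two ingredients: for $c=2$ we get $\exp(M(G))$ divides $p^e\cdot p^e = p^{2e}$... which is too weak. So the induction must be set up more carefully, peeling off one factor of $p^e$ per step of the lower central series and arranging that the innermost (abelian) term contributes nothing extra beyond what is already counted. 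The correct bookkeeping is: writing $n_j$ for the class of $G/\gamma_j(G)$, one descends from $\gamma_c(G)$ down to $\gamma_3(G)$, each application of Theorem 22(ii) multiplying in one factor whose exponent divides $p^e$, a total of $c-2$ such factors, and then $M(G/\gamma_3(G)) = M(G/\gamma_3(G))$ has class $2$; applying Theorem 22(ii) once more reduces to $M(G/\gamma_2(G)) = M(G/G')$, which is a multiplier of an abelian $p$-group of exponent dividing $p^e$ and so has exponent dividing $p^e$ — but this last contribution can be absorbed by noting that $\gamma_2(G)\otimes(G/Z_1(G))$ already supplies one of the needed $p^e$ factors, giving $e(c-1)$ in total rather than $ec$.

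The main obstacle I anticipate is precisely this last point: showing that the abelian base case $M(G/G')$ does \emph{not} add a spurious extra factor of $p^e$ beyond the $c-1$ factors contributed by the tensor-product terms $\gamma_{j+1}(G)\otimes Q_{j+1}$ (equivalently $\gamma_j(G)\otimes(G/Z_{j-1}(G))$) for $j=2,\dots,c$. Heuristically this works because the very first step already extracts the "abelian-part" exponent: $Z_1(G)=Z(G)$ contains $\gamma_c(G)$, and one can argue that the relevant bound for $M(G/G')$ is subsumed — this is exactly the improvement of Theorem 22 over Theorem 15 that upgraded Corollary 20 ($p^{ec}$) to Corollary 23 ($p^{e(c-1)}$). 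So I would carefully track the induction hypothesis as "$\exp(M(G))$ divides $p^{e(c-1)}$ for groups of class $\le c$", verify the base case $c=2$ directly from Theorem 22(ii) (where $M(G/\gamma_2(G))=M(G/G')$ has exponent dividing $p^e$ and $\gamma_2(G)\otimes(G/Z_1(G))$ has exponent dividing $p^e$, but one of these is redundant because $G/\gamma_2(G)$ abelian forces the multiplier exponent to be controlled by the exponent of $G/G' $, which is bounded using the structure of $M$ of an abelian group together with $\exp(G')$ dividing... ), and then run the inductive step using Theorem 22(ii) together with the tensor-exponent inequality $\exp(A\otimes B)\mid\exp(A)$.
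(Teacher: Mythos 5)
Your plan --- apply part (ii) of the preceding theorem (Theorem 19, the Jones [7] result) and induct on the class, using $\exp(A\otimes B)\mid\gcd(\exp A,\exp B)$ --- is the natural first attempt, but as you yourself observe it does not close: each application of Theorem 19(ii) costs one factor of $p^{e}$, and after descending to the abelianization you are still left with the term $\exp(M(G/G'))$, which can be as large as $p^{e}$, so the count comes out to $p^{ec}$; that is, you have only re-derived Corollary 18, not the stated bound $p^{e(c-1)}$. The whole content of the corollary is the removal of exactly one factor $p^{e}$, and that is precisely the step your proposal never supplies: the claim that the $M(G/G')$ contribution ``can be absorbed'' because ``one of these is redundant'' is asserted but not proved --- the sentence intended to justify it trails off unfinished. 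Equivalently, your induction has no valid base case: for $c=2$ the assertion to be proved is $\exp(M(G))\leq p^{e}$ for a class-two $p$-group of exponent $p^{e}$, while your two ingredients give only $p^{2e}$.

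This is a genuine gap, not a matter of bookkeeping. The paper itself states this corollary without proof, as a result quoted from Jones [7]; however it is obtained there, it cannot be extracted from Theorem 19(ii) and the tensor-exponent inequality alone, since an argument of that shape necessarily pays one factor $p^{e}$ per step of the lower central series plus one more for the abelian quotient. To complete your proof you would need an independent argument that $\exp(M(H))$ divides $\exp(H)$ for finite $p$-groups $H$ of class two (applied to $H=G/\gamma_{3}(G)$; with that base case the induction you set up does give $p^{e}\cdot p^{e(c-2)}=p^{e(c-1)}$), or some other genuine mechanism for discarding a factor of $p^{e}$. Nothing of this kind appears in the proposal, and the class-two exponent bound is a substantive theorem in its own right --- it is, for instance, the case $k=2$ of the Ellis bound $p^{e\lceil k/2\rceil}$ quoted in the paper. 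The heuristic you offer ($\gamma_{c}(G)\leq Z(G)$, so ``the first step already extracts the abelian-part exponent'') has no quantitative consequence for $\exp(M(G/G'))$ and does not repair the count.
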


G. Ellis [3], S. Kayvanfar and M.A. Sanati [9] generalized the
result of M.R. Jones for the exponent of the Schur multiplier of
$G$ (Corollary 20). A result of G. Ellis [3] shows that if $G$ is
a $p$-group of class $k \geq 2$ and exponent $p^e$, then
$exp(M^{(c)}(G)) \leq  p^{e[ \frac{k}{2}]}$, where $[
\frac{k}{2}]$ denotes the smallest integer $n$ such that $n \geq [
\frac{k}{2}]$. Clearly the recent bound sharpens the bound
obtained in 20.\\
S. Kayvanfar and M.A. Sanati [9] proved that $exp(M(G))\leq
exp(G)$ when $G$ is a finite $p$-group of class 3, 4 or 5
and $exp(G)$ satisfies in some conditions. \\

Now, in this paper we are going to concentrate on the Jones
results and trying to generalize some of them. We will give some
upper bounds for the order, the exponent and the number of
generators of the nilpotent multiplier of a finite group,
specially of a finite $p$-group and compare some of them with
previous results. We use the notation $\otimes^{c+1}(B,A)$ for the
tensor product $ B\otimes A \otimes ... \otimes A$ involving $c$
copies of $A$, where $A$ and $B$ are arbitrary groups. The main
results of this article are as follows \\

The following theorem is a generalization of Theorem 6. \\

\textbf{Theorem A.  } Let $G$ be a finite $p$-group of order $ p
^{n} $, $B$ be a cyclic subgroup of $Z(G)$ of order $ p ^{k} $,
where $ p ^{k} = exp (Z(G)) $, and $ A= G/B $ be a
$d$-generator group. Then \\
$$ | \gamma _{c+1}(G) | | M ^{(c)} (G) | \leq p ^{\chi _{c+1}
(n-k)+ dk(1+d) ^{c-1}} .$$
The next theorem extends Theorem 15 and has several interesting corollaries.\\

\textbf{Theorem B.  } Let $G$ be a finite nilpotent group of class
$ t \geq 1 $ and let $ Q_{j} = G/\gamma
_{j}(G) $ for $ 2 \leq j \leq t $. Then\\
(i) $| \gamma _{c+1} (G)|| M^{(c)} (G) |$ divides $ | M^{(c)}
(G/G') | \prod ^{t-1} _{k=1} |\otimes ^{c+1}(\gamma _{k+1} (G),
Q_{k+1}) |; $ \\
(ii)$exp( M^{(c)} (G) ) $ divides $ exp( M^{(c)}(G/G')) \prod
^{t-1}_{k=1} exp (\otimes ^{c+1}(\gamma _{k+1} (G), Q_{k+1})); $ \\
(iii)$d( M^{(c)} (G) ) \leq  d( M^{(c)}(G/G'))+ \sum ^{t-1}
_{k=1}d(\otimes ^{c+1}(\gamma _{k+1} (G), Q_{k+1}) ). $ \\

The following theorem generalizes Theorem 19 and gives some
bounds in terms of the lower central series. \\

\textbf{Theorem C.  } Let $G$ be a finite nilpotent group of class
$ t \geq 2 $. Let  $ Z _{j} = Z _{j}(G) $ for $ 1
\leq j \leq t $, then \\
(i)\

a) If $ c+1 \leq t $, then \

$| \gamma _{t} (G) | | M^{(c)} (G) | $ divides $  | M^{(c)}
(G/\gamma _{t} (G)) |  |\otimes ^{c+1}(\gamma _{t} (G) , G/Z
_{t-1}(G)) . $ \

 b) If $ c+1 > t $, then \

 $ | \gamma _{c+1} (G) | | M^{(c)} (G) | $ divides $  |
M^{(c)}(G/\gamma _{t} (G)) | |\otimes ^{c+1}(\gamma _{t} (G) , G/Z
_{t-1}(G)) |.$ \\
(ii) $exp ( M^{(c)} (G)) $ divides $  exp( M^{(c)}(G/\gamma _{t}
(G))) exp (\otimes ^{c+1}(\gamma _{t} (G), G/Z _{t-1}(G)) ). $\\
(iii) $d ( M^{(c)} (G))  \leq  d( M^{(c)}(G/\gamma _{t} (G)))+ d
(\otimes ^{c+1}(\gamma _{t} (G) , G/Z _{t-1}(G))).$

\section{ Proofs of Main Results}

In order to prove main results we need the following lemmas.

\begin{lemma}
Let $G$ be a group, then for any positive integer $i$
and any normal subgroups $M$, $N$ of $G$, we have
$$ [M, \gamma_{i}(N)]\subseteq[M,\ _{i}N].$$
\end{lemma}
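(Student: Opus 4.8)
The plan is to reduce the statement to the defining relations of the lower central series together with the standard commutator calculus, in particular the three‑subgroup lemma. First I would recall that $[M,\ _{i}N]$ is by definition $[M,\underbrace{N,\dots,N}_{i}]$, so that the claim is exactly $[M,\gamma_i(N)]\subseteq[M,N,\dots,N]$ ($i$ copies of $N$). I would argue by induction on $i$. The base case $i=1$ is the trivial identity $[M,\gamma_1(N)]=[M,N]=[M,\ _1N]$.

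For the inductive step, assume $[M,\gamma_{i-1}(N)]\subseteq[M,\ _{i-1}N]$ for all choices of normal subgroups, and write $\gamma_i(N)=[\gamma_{i-1}(N),N]$. The point is to estimate $\bigl[M,[\gamma_{i-1}(N),N]\bigr]$. Here I would invoke the three‑subgroup lemma (or P.\ Hall's identity): for normal subgroups $X,Y,Z$ of $G$ one has $[X,[Y,Z]]\subseteq[[X,Y],Z]\,[[X,Z],Y]$. Applying this with $X=M$, $Y=\gamma_{i-1}(N)$, $Z=N$ gives
$$\bigl[M,[\gamma_{i-1}(N),N]\bigr]\subseteq\bigl[[M,\gamma_{i-1}(N)],N\bigr]\bigl[[M,N],\gamma_{i-1}(N)\bigr].$$
For the first factor, the induction hypothesis yields $[M,\gamma_{i-1}(N)]\subseteq[M,\ _{i-1}N]$, hence $[[M,\gamma_{i-1}(N)],N]\subseteq[M,\ _{i-1}N,N]=[M,\ _iN]$. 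For the second factor, note $[M,N]$ is normal in $G$ (as $M,N$ are), so applying the induction hypothesis with $M$ replaced by $[M,N]$ gives $[[M,N],\gamma_{i-1}(N)]\subseteq[[M,N],\ _{i-1}N]=[M,\ _iN]$. Combining, $[M,\gamma_i(N)]\subseteq[M,\ _iN]$, completing the induction.

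The main technical care needed is in checking the normality hypotheses required by the three‑subgroup lemma and by the inductive use of the statement: one must know that $[M,N]$, $\gamma_{i-1}(N)$ and the various iterated commutator subgroups are all normal in $G$, which follows since $M,N\trianglelefteq G$ and commutator subgroups of normal subgroups are normal. I do not anticipate a serious obstacle here; the only subtlety is organizing the induction so that the hypothesis is applied with the correct pair of normal subgroups (once with $(M,N)$ and once with $([M,N],N)$). If one prefers to avoid the three‑subgroup lemma, an alternative is a direct generator‑by‑generator argument expanding commutators $[m,[\ldots]]$ using the identities $[a,bc]=[a,c][a,b]^c$ and $[ab,c]=[a,c]^b[b,c]$, but the three‑subgroup approach is cleaner and is the route I would present.
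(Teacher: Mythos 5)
Your proposal is correct and follows essentially the same route as the paper: induction on $i$ combined with the three‑subgroup lemma, applying the inductive hypothesis once to the pair $(M,N)$ and once to $([M,N],N)$. In fact you spell out the base case and the normality checks slightly more explicitly than the paper does, but the argument is the same.
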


\begin{proof}
We use induction on $ i $. Suppose $ [M, \gamma_{i}(N)] \subseteq
[M,\ _{i}N]$, for any normal subgroups $ M$, $N$ of $G$. Then by
the Three Subgroup Lemma we have
\begin{eqnarray*}
[ \gamma_{i+1}(N), M]=[\gamma_{i}(N),N,M]&\subseteq&
 [N,M,\gamma_{i}(N)][M,\gamma_{i}(N),N] \\& \subseteq& [M,\
 _{i+1}N].
\end{eqnarray*}
\end{proof}

\begin{lemma} Let $F/R$ be a presentation of $G$ as a factor
group of a free group $F$. Let $B=S/R $ be a normal subgroup of
$G$, so that $A=G/B \cong F/S$. Then there exists the following
epimorphism
$$\otimes ^{c+1}(B , A) \longrightarrow  \frac{[S,\ _{c}F]}{[R,\ _{c}F]
[S,\ _{c+1}F] \prod_{i=2}^{c+1}\gamma_{c+1}(S,F)_{i}} ,$$ where
for all $2\leq i\leq c$, $\gamma_{c+1}(S,F)_{i}= [D_1, D_2, ...,
D_{c+1}]$ such that $D_{1}=D_{i}=S $ and $D_{j}=F$, for all $j
\neq 1,i$.
\end{lemma}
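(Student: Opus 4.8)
The plan is to exhibit the map explicitly as an iterated commutator and then show it factors through the tensor product. Write $N=[R,\ _{c}F]\,[S,\ _{c+1}F]\prod_{i=2}^{c+1}\gamma_{c+1}(S,F)_{i}$ for the denominator and define
$$\psi:\underbrace{S\times F\times\cdots\times F}_{1+c}\longrightarrow\frac{[S,\ _{c}F]}{N},\qquad\psi(s,f_{1},\dots,f_{c})=[s,f_{1},\dots,f_{c}]\,N .$$
This is legitimate: since $R\subseteq S\subseteq F$ and $S$ is normal in $F$, each of the three factors of $N$ lies in $[S,\ _{c}F]$, and $[s,f_{1},\dots,f_{c}]\in[S,\ _{c}F]$ because $S\subseteq F$. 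The task is then to verify that $\psi$ respects the bilinearity and the triviality relations defining $\otimes^{c+1}(B,A)$, and that the resulting homomorphism is onto.

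Two preliminary facts carry the argument. First, $[S,\ _{c}F]/[S,\ _{c+1}F]$ is a central, abelian subgroup of $F/[S,\ _{c+1}F]$: centrality is immediate from $[[S,\ _{c}F],F]=[S,\ _{c+1}F]$, and abelianness follows because $[S,\ _{c}F]\subseteq F$ forces $[[S,\ _{c}F],[S,\ _{c}F]]\subseteq[[S,\ _{c}F],F]=[S,\ _{c+1}F]$. Hence, once we reduce modulo $[S,\ _{c+1}F]$, commutator identities become additive and conjugation disappears. Second, and this is the crux, I will use the following claim: \emph{if $m\geq c+1$ and $g_{1},\dots,g_{m}\in F$ with either at least two of the $g_{j}$ in $S$, or at least one of the $g_{j}$ in $R$, then $[g_{1},\dots,g_{m}]\in N$.} This is proved by a collection argument (cf. Definition 3 and Theorem 4) using the normality of $R$ and $S$ in $F$ and induction on $m$: collecting an offending $S$-entry (resp. $R$-entry) to the front turns $[g_{1},\dots,g_{m}]$ into a commutator of the form defining $\gamma_{c+1}(S,F)_{i}$ or one lying in $[S,\ _{c+1}F]$ (resp. in $[R,\ _{c}F]$), together with deeper commutators, of weight $>m$, that still have a second $S$-entry (resp. an $R$-entry) and are therefore absorbed by the induction.

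Granting these, the factorization of $\psi$ is routine. Bilinearity in each argument comes from $[xy,z]=[x,z]\,[x,z,y]\,[y,z]$ and $[x,yz]=[x,z]\,[x,y]\,[x,y,z]$: expanding $\psi(ss',f_{1},\dots,f_{c})$ or $\psi(s,\dots,f_{j}f_{j}',\dots,f_{c})$, the leading terms give the product of the two values, while every correction term, after being commuted through the remaining entries, is a commutator of weight $\geq c+1$ carrying either one extra $F$-entry (so it lies in $[S,\ _{c+1}F]$) or one extra $S$-entry (so it lies in $N$ by the claim); the central-abelian fact above is what lets us discard the conjugating factors produced along the way. Likewise $\psi$ is unchanged if $s$ is multiplied by an element of $R$ (the corrections then acquire an $R$-entry), if $s$ is multiplied by an element of $S'=[S,S]$ (the corrections acquire a second $S$-entry), or if some $f_{j}$ is multiplied by an element of $F'=\gamma_{2}(F)$ — here Lemma 21, applied to the normal subgroups $[S,\ _{j-1}F]$ and $F$, yields $[[S,\ _{j-1}F],\gamma_{2}(F)]\subseteq[S,\ _{j+1}F]$ and hence that the correction lies in $[S,\ _{c+1}F]$. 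Therefore $\psi$ induces a homomorphism $\phi:\otimes^{c+1}(B,A)\longrightarrow[S,\ _{c}F]/N$. It is onto because $[S,\ _{c}F]$ is generated, as a subgroup of $F$, by the $F$-conjugates of the commutators $[s,f_{1},\dots,f_{c}]$ ($s\in S$, $f_{j}\in F$), and conjugation by $F$ is trivial modulo $[S,\ _{c+1}F]\subseteq N$; so the elements $\phi(\bar s\otimes\bar f_{1}\otimes\cdots\otimes\bar f_{c})=[s,f_{1},\dots,f_{c}]\,N$ already generate $[S,\ _{c}F]/N$.

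The main obstacle is the displayed claim about $N$. The existence of a collection formula is standard; the delicate point is the accounting — one must check that the deeper commutators thrown off during collection still possess a second $S$-entry (or an $R$-entry) and still have weight at least $c+1$, so that they re-enter the induction rather than leaking out into $\gamma_{\geq c+1}(F)$. It is precisely this requirement that fixes the shape of $N$, and in particular forces the inclusion of the subgroups $\gamma_{c+1}(S,F)_{i}$ for every $i$ and of the whole term $[R,\ _{c}F]$ (rather than just $[R,\ _{c+1}F]$).
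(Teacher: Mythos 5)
Your construction is essentially the paper's own proof: both define the map by $(s,f_1,\dots,f_c)\mapsto [s,f_1,\dots,f_c]N$, check multilinearity modulo $[S,\ _{c+1}F]$, handle well-definedness via Lemma 21-type commutator containments, and finish with the universal property of the tensor product — your absorption claim simply spells out what the paper dismisses as ``the usual commutator calculations.'' One item missing from your explicit checklist: since $A=F/S$, you must also verify that $\psi$ is unchanged when some $f_j$ is multiplied by an element of $S$ (not only of $F'$), but this follows at once from the first clause of your claim, because by multilinearity the correction is $[s,f_1,\dots,f_{j-1},x,f_{j+1},\dots,f_c]$ with $x\in S$, a weight-$(c+1)$ commutator with two $S$-entries lying in $\gamma_{c+1}(S,F)_{j+1}\subseteq N$ — indeed this is exactly why those factors appear in the denominator, as you note.
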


\begin{proof}
Define $$\theta: \otimes ^{c+1}(\frac{S}{RS'}, \frac{F}{SF'})
\longrightarrow \frac{[S,\ _cF] }{[R,\ _cF][S,\ _{c+1} F]\prod
^{c+1}_{i=2}\gamma_{c +1}(S,F)_{i}}$$ by $\theta(sRS', f_1 SF',
..., f_c SF') = [s,f_1, ..., f_c][R,\ _cF] [S,\ _{c+1}F]\prod
^{c+1}_{i=2}\gamma_{c +1}(S,F)_{i}$, where $s \in S$ and $f_i \in
F$ for all $i=1, 2, ..., c$. The usual commutator  calculations
and Lemma 21 show that $\theta$ is well defined. Also for any
$s_1,s_2 \in S$ and $f_1, ..., f_c, f'_{1},
..., f'_{c} \in F,$ we have \\
$[s_1 s_2,f_1, ... ,f_c] \equiv [s_1, f_1, ..., f_c][s_2, f_1,
..., f_c] $\\
$[s_1, f_1, ..., f_{i}f'_{i}, ..., f_c]\equiv [s_1, f_1, ...,
f_{i}, ..., f_c][s_1, f_1, ..., f'_i, ..., f_c]( mod [S,\ _{(c+1)}F]).$\\
Then $\theta $ is a multilinear map. Therefore the universal
property of the tensor product completes the proof.
\end{proof}
Now we are ready to prove Theorem A.\\

\textbf{Proof of Theorem A}.\\

 Let $F/R $ be a free presentation of $G$  with $ B= S/R $, so that $ A=G/B=F/S $.
Then $$| \gamma _{c+1}(G) | | M^{(c)}(G)|  = | \frac{\gamma
_{c+1}(F)R}{R} ||\frac{\gamma_{c+1}(F)\cap R}{[R,\ _cF]}|  =
 | \frac{\gamma _{c+1}(F)}{\gamma_{c+1}(F) \cap R} ||\frac{\gamma_{c+1}(F)\cap R}{[R,\ _cF]}|$$

 $$  =
|\frac{\gamma _{c+1}(F)/[R,\ _cF]}{(\gamma_{c+1}(F)\cap R)/[R,\ _
cF]}||\frac{\gamma_{c+1}(F)\cap R}{[R,\ _cF]}|   =
|\frac{\gamma_{c+1}(F)}{[R,\ _cF]}| =|\frac{\gamma_{c+1}(F)}{[S,\
_ cF]}||\frac{[S,\ _cF]}{[R,\ _cF]}|.$$ On the other hand by
corollary 12 we have  $$ |\frac{\gamma _{c+1} (F)}{[S,\ _cF]}|=
|\gamma _{c+1} (A) | |M^{(c)} (A)| \leq p ^{ \chi  _{c+1}
(n-k)}.$$ Now it is enough to show that $ |[S,\ _cF]/[R,\ _cF]|
\leq p ^{dk( 1+d) ^{c-1}}$. We use the following notation for all
$1 \leq j \leq c-1$
 $$ P _{j} = \prod _{( D_1,...,D_{c-1})\in Y_j} [ S,F,D _{1},...,D_{c-1} ], $$ \\
 where $$Y_j=\{(D_1,...,D_{c-1})| \ \ \exists \ i_1,i_2,...,i_j \  s.t. \ D_k =S \
 for\  all\  $$
 $$\ \ \ \ \ \ \ \ \ \ \ \ \ \ \ \ \ \ \ \ k = i_s,   1\leq s \leq
 j \ and\  D_k =F,\   otherwise\  \},$$
and $P_0 = [S,\ _cF]$ , $P_c = \gamma_{c+1}(S)$. It is easy to see that \\
$ |[S,\ _cF]/[R,\ _cF]| = | P _{0} /( [R,\ _cF] P _{1} ) | | ([R,\ _cF] P _{1} ) / [R,\ _cF] |
 = ... $ \\
$ = |P _{0} /( [R,\ _cF] P _{1}) || ( [R,\ _cF] P _{1} ) / ([R,\
_cF] P _{2}) |...| ( [R,\ _cF] P _{c-1} ) / [R,\ _cF] |.$ \\
Since $ B \leq Z (G) $, $ [S,F] \leq R $ and thus $S'\leq R$.
Also, since $M(B)=1$, $S'\bigcap R=[R,S]$. So that $S'=[R,S]$.
Hence $[S,S,\ _{c-1}F]\leq[R,\ _cF]$. Therefore by lemma 22 we
have the following epimorphism

$$\otimes ^{c+1}( B , A)  \rightarrow \frac{ P_{0}
}{[R,\ _cF]P_{1}} $$\\
Also by an argument similar to the proof of lemma 22 one can
deduce the following epimorphism
$$\oplus \sum_{(D_1,...,D_{c-1})\in Y'_j} B \otimes A \otimes  D_1\otimes ... \otimes D_{c-1}
\rightarrow \frac{ [R,\ _cF]P_{j}}{ [R,\ _cF]P_{j+1}},$$where
$$Y'_j=\{ (D_1,...,D_{c-1}) | \ \ \exists \ i_{1},...,i_{j}\  s.t.
\ D_k =B\ for\ all \ $$ $$ \ \ \ \ \ \ \ \ k = i_s,  1\leq s \leq
j
 \ and\  D_k =A ,\   otherwise\ \}, $$
for all $1 \leq j \leq c-1$. Now since $ | A \otimes B | \leq $
min $ \{|A _{ab}|^{d(B)} ,|B_{ab}|^{d(A)} \}$, $ |( [R,\ _cF]
P_{j}) / ([R,\ _cF] P_{j+1} )| \leq p^{ { c-1\choose j-1 }
kd^{c-j+1}}$ for all $ 1 \leq j
\leq c-1 $. Therefore\\
$ | [S,\ _cF] / [R,\ _cF]|\leq p ^{{c-1\choose 0
}kd^{c}+{c-1\choose 1 }kd^{c-1}+ ... +{c-1\choose c-1}kd} $ = $
p^{kd(d+1)^{c-1}}$.\\

The following example compares the above bound and the upper bound
of Corollary 12 .\\

\begin{exam}
 If  $G = {\bf Z}_{p^k}\oplus {\bf Z}_{p^{\alpha_1}}\oplus
{\bf Z}_{p^{\alpha_2}}\oplus ... \oplus {\bf Z}_{p^{\alpha_d}}$
such that $\alpha_d \leq... \leq \alpha_2 \leq \alpha_1 \leq k$
and $k
> 1$, then $B={\bf Z}_{p^k}$, $A \cong {\bf
Z}_{p^{\alpha_1}}\oplus {\bf Z}_{p^{\alpha_2}}\oplus ... \oplus
{\bf Z}_{p^{\alpha_d}}$ and $n=k+t$ where $t=\alpha_1+ \alpha_2 +
\ldots + \alpha_d$. Now for $c=2$, by Corollary 12 $$|\gamma
_{c+1}(G) | | M ^{(c)} (G) | \leq p ^{\chi _{3}(n)} = p^{1/3(t^3
-t +3kt^2 +k^3+3k^2-k)}.$$ But using Theorem A we have $$| \gamma
_{c+1}(G) | | M ^{(c)} (G) | \leq p ^{\chi _{3} (t)+ dk(1+d)
}=p^{1/3(t^3-t+3dk+3d^2k)}\leq p^{1/3(t^3-t+3t^2k+3tk)}.$$ It is
easy to see that $k^3+3k^2-k > 3tk$. Therefore the previous bound
in Corollary 12 is larger than the bound of Theorem A for all
finite abelian $p$-groups but elementary abelian $p$-groups.
\end{exam}

Now the following theorem is needed to prove Theorem B.

\begin{theorem}
Let $ G$ be a finite nilpotent group of class $ t \geq 2 $ , let $
G = F/R $ be a free presentation of $G$ . Then \\
(i) $ | \gamma _{c+1} (G) | | M^{(c)} (G) | = | M^{(c)} (G/G') |
\prod^{t-1}_{k=1}| [\gamma_{k+1} (F)R ,\ _cF ] / [\gamma_{k+2}
(F)R ,\ _cF ] |; $\\
(ii)  $  exp( M^{(c)} (G) ) $ divides $exp( M^{(c)} (G/G') )
\prod^{t-1}_{k=1} exp( [\gamma_{k+1} (F)R ,\ _cF ] / [\gamma_{k+2}
(F)R ,\ _cF ] ). $ \\
(iii) $  d ( M^{(c)} (G) ) \leq d( M^{(c)} (G/G') ) + \sum
^{t-1}_{k=1} d( [\gamma_{k+1} (F)R ,\ _cF ]
 / [\gamma_{k+2}(F)R ,\ _cF ] ); $
\end{theorem}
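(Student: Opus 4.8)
The plan is to do all the counting inside a free presentation $F/R$ of $G$ and to transfer filtrations between the groups $\gamma_{c+1}(F)/[R,\ _cF]$, $M^{(c)}(G/G')$ and $M^{(c)}(G)$. I would begin exactly as in the proof of Theorem A: $\gamma_{c+1}(G)$ is finite because $G$ is, and $M^{(c)}(G)$ is finite because $\mathcal{N}_c$ is a Schur-Baer variety (Theorem 2), so the chain of equalities at the start of that proof gives
$$|\gamma_{c+1}(G)||M^{(c)}(G)| = \left|\gamma_{c+1}(F)/[R,\ _cF]\right|.$$
Next, since $G/G'=F/RF'$ is abelian we have $\gamma_{c+1}(F)\subseteq F'\subseteq RF'$, hence $RF'\cap\gamma_{c+1}(F)=\gamma_{c+1}(F)$ and therefore $M^{(c)}(G/G')=\gamma_{c+1}(F)/[RF',\ _cF]$.

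The combinatorial core is the chain of normal subgroups of $F$
$$[R,\ _cF]=[\gamma_{t+1}(F)R,\ _cF]\subseteq[\gamma_{t}(F)R,\ _cF]\subseteq\cdots\subseteq[\gamma_{2}(F)R,\ _cF]=[RF',\ _cF]\subseteq\gamma_{c+1}(F),$$
whose leftmost equality uses $\gamma_{t+1}(F)\subseteq R$ (because $G$ has class $t$) and whose rightmost equality is just $\gamma_2(F)=F'$. Iterating the identity $[AB,F]=[A,F][B,F]$ for normal subgroups $A,B$ of $F$ gives $[\gamma_{k+1}(F)R,\ _cF]=\gamma_{c+k+1}(F)[R,\ _cF]$, so each successive quotient $[\gamma_{k+1}(F)R,\ _cF]/[\gamma_{k+2}(F)R,\ _cF]$ is a homomorphic image of the free abelian group $\gamma_{c+k+1}(F)/\gamma_{c+k+2}(F)$ (Theorem 4), hence abelian. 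For part (i) I would multiply orders along the chain, obtaining $|\gamma_{c+1}(F)/[R,\ _cF]| = |\gamma_{c+1}(F)/[RF',\ _cF]|\cdot\prod_{k=1}^{t-1}|[\gamma_{k+1}(F)R,\ _cF]/[\gamma_{k+2}(F)R,\ _cF]|$, and substituting the two identities above yields the stated equality.

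For parts (ii) and (iii) one cannot work directly with $\gamma_{c+1}(F)/[R,\ _cF]$, since exponent and minimal number of generators need not descend to its subgroup $M^{(c)}(G)$; instead I would pull the above chain back into $M^{(c)}(G)$. Put $C_k=R\cap[\gamma_{k+1}(F)R,\ _cF]$ for $1\le k\le t$; then $C_t=[R,\ _cF]$, $C_1=R\cap[RF',\ _cF]$, and $[R,\ _cF]=C_t\subseteq C_{t-1}\subseteq\cdots\subseteq C_1\subseteq R\cap\gamma_{c+1}(F)$. Dividing by $[R,\ _cF]$ produces a series of $M^{(c)}(G)$ — normal because $M^{(c)}(G)$ is abelian — with factors $C_k/C_{k+1}$ for $1\le k\le t-1$, each of which embeds in the abelian group $[\gamma_{k+1}(F)R,\ _cF]/[\gamma_{k+2}(F)R,\ _cF]$, and with top factor $(R\cap\gamma_{c+1}(F))/C_1$ embedding in $M^{(c)}(G/G')$. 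Since a subgroup of an abelian group has exponent dividing, and minimal number of generators no larger than, that of the whole group, the standard facts that $exp(H)$ divides $\prod_i exp(H_i/H_{i-1})$ and $d(H)\le\sum_i d(H_i/H_{i-1})$ for a normal series $1=H_0\subseteq\cdots\subseteq H_m=H$ (the last inequality being the abelian case of the one recalled in Definition 16) then give (ii) and (iii).

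The step I expect to be the main obstacle is part (iii): one must realize that the filtration has to live inside $M^{(c)}(G)$ rather than inside $\gamma_{c+1}(F)/[R,\ _cF]$, and that the generator-count comparison is only legitimate once the quotients $[\gamma_{k+1}(F)R,\ _cF]/[\gamma_{k+2}(F)R,\ _cF]$ have been shown abelian via $[\gamma_{k+1}(F)R,\ _cF]=\gamma_{c+k+1}(F)[R,\ _cF]$ and Theorem 4. The boundary identifications $[\gamma_{t+1}(F)R,\ _cF]=[R,\ _cF]$ and $\gamma_{c+1}(F)\subseteq RF'$, and the bookkeeping matching the $k$-th commutator factor to the $k$-th term of the product, also need some care but are routine.
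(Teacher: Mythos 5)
Your proof is correct, and part (i) is essentially the paper's own argument: the same telescoping of $\gamma_{c+1}(F)/[R,\ _cF]$ along the chain of subgroups $[\gamma_{k+1}(F)R,\ _cF]$, using $[\gamma_{t+1}(F)R,\ _cF]=[R,\ _cF]$ and the identification $M^{(c)}(G/G')=\gamma_{c+1}(F)/[RF',\ _cF]$. For (ii) and (iii) you take a genuinely different route: the paper stays in the over-group $\gamma_{c+1}(F)/[R,\ _cF]$, handling (ii) by the divisibility $exp(M^{(c)}(G))\mid exp(\gamma_{c+1}(F)/[R,\ _cF])$ and (iii) by the special rank $r$ of Definition 16 (via $r(X)\le r(X/N)+r(N)$ and then $r=d$ on abelian quotients), whereas you intersect the chain with $R$ to get a filtration of $M^{(c)}(G)$ itself, whose top factor embeds in $M^{(c)}(G/G')$ and whose $k$-th factor embeds, by the second isomorphism theorem, in $[\gamma_{k+1}(F)R,\ _cF]/[\gamma_{k+2}(F)R,\ _cF]$. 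Both routes are valid; yours has the merit of making explicit, via $[\gamma_{k+1}(F)R,\ _cF]=\gamma_{c+k+1}(F)[R,\ _cF]$ and Theorem 4, that these successive quotients are abelian --- a fact the paper uses silently when converting $r$ back to $d$ --- and of dispensing with the rank notion altogether. One small correction to your stated motivation: exponent does descend to subgroups ($exp(H)$ divides $exp(K)$ whenever $H\le K$ and $exp(K)$ is finite), so the paper's direct passage through $\gamma_{c+1}(F)/[R,\ _cF]$ is already legitimate for (ii); your extra care with the pulled-back filtration is genuinely needed only for the generator count in (iii), where minimal generating sets of subgroups can indeed be larger. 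This does not affect the correctness of your argument.
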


\begin{proof}
With the previous notation, we have

 (i)\begin{eqnarray*}
  | \gamma _{c+1}(G) | | M^{(c)} (G) | &=& | \gamma _{c+1} (F)/[R,\ _cF]|\\ &=&
|\gamma _{c+1} (F) /( [R \gamma _{2}(F),\ _cF] ) | |[R \gamma _{2}
(F) ,\ _cF ] / [ R,\ _cF] | \\ &=&| M^{(c)} (G/G') || [R \gamma
_{3} (F) ,\ _cF ]/[ R,\ _cF] |\\& & | [R \gamma _{2} (F) ,\ _cF
]/[ R \gamma _{3} (F),\ _cF] |\\ &=&... \\
&=&| M^{(c)} (G/G') || [R\gamma _{t+1} (F) ,\ _cF ]/[ R,\ _cF] |
\\ & & \prod ^{t-1}_{k=1}| [R \gamma _{k+1} (F) ,\ _cF ]/[ R \gamma
_{k+2} (F),\ _cF] |\\
&=&| M^{(c)} (G/G') | \prod ^{t-1}_{k=1} | [R \gamma _{k+1} (F) ,\
_cF ]/[ R \gamma _{k+2}(F),\ _cF] |.
\end{eqnarray*}

(ii)The proof is similar to (i).
\begin{eqnarray*}
 exp ( M^{(c)} (G) ) &=& exp ( R \cap \gamma _{c+1} (F)/[R,\ _cF])  \\
&\mid& exp ( \gamma _{c+1} (F)/[R,\ _cF] ) \\
&\mid& exp(M^{(c)} (G/G') )exp ( [R \gamma _{2} (F) ,\ _cF ]/[ R,\ _cF] ) \\
&\mid& exp(M^{(c)} (G/G') )exp( [R \gamma _{3} (F) ,\ _cF ]/[ R ,\
_cF] )\\
& &exp( [R \gamma _{2} (F) ,\
_cF ]/ [R \gamma _{3} (F) ,\ _cF ])\\
 &\mid& ...\\
 &\mid& exp(M^{(c)} (G/G') ) \prod^{t-1}_{k=1} exp( [\gamma_{k+1} (F)R ,\ _cF
] / [\gamma_{k+2} (F)R ,\ _cF ] ).
\end{eqnarray*}

(iii) We have
\begin{eqnarray*}
  d ( M^{(c)} (G) ) &=& r ( M^{(c)} (G) ) = r ( R
 \cap \gamma _{c+1} (F)/[R,\ _cF])  \\
 &\leq&  r ( \gamma _{c+1} (F)/[R,\ _cF] )\\
&\leq& r ( M^{(c)} (G/G'))+ r ( [R \gamma _{2} (F) ,\ _cF ]/[ R,\ _cF] ) \\
 &=& d ( M^{(c)} (G/G') )+ d ( [R
\gamma _{2} (F) ,\ _cF ]/[ R,\ _cF] ) \\
&\leq& d ( M^{(c)} (G/G') ) + d ( [R \gamma _{3} (F) ,\ _cF ]/[
R,\ _cF] )\\ &+& d( [R \gamma _{2} (F) ,\ _cF ]/[ R \gamma _{3}
(F),\ _cF] )\\
&\leq&...\\
&\leq&  d ( M^{(c)} (G/G') ) + \sum ^{t-1}_{k=1} d( [R \gamma
_{k+1} (F) ,\ _cF ]/[ R \gamma _{k+2}(F),\ _cF] ).
\end{eqnarray*}

\end{proof}

\hspace{-0.65cm}\textbf{Proof of Theorem B}. \\

Using the notation of lemma 22, put $B= (\gamma _{k+1} (F)R) /R $
and $ A= G/\gamma_{k+1}(G)= Q_{k+1} $, then we have the following
epimorphism
$$ \otimes ^{c+1}(\gamma _{k+1}(G) , Q_{k+1})  \rightarrow  \frac{[R \gamma _{k+1} (F),\ _cF]}
{([R,\ _cF][R \gamma _{k+1} (F),\ _{c+1}F] \prod ^{c+1}_{i=2}
\gamma _{c+1}(R \gamma _{k+1} (F),F) _{i})}. $$ On the other hand
 $$ [R,\ _cF] [R \gamma _{k+1} (F) ,\ _{c+1}F] \prod _{i=2}
^{c+1} \gamma _{c+1} (R \gamma _{k+1} (F),F)_i \leq [\gamma
_{k+2} (F)R,\ _cF], $$ since $ [R \gamma _{k+1} (F) ,\ _{c+1}F] =
[R,\ _{c+1}F][\gamma _{k+1} (F),\ _{c+1}F]  \leq [\gamma
_{k+2}(F)R,\ _cF] $. Also, for all positive integers $n$, $m$
such that $ m+n=c-1 $, we have \\
$ [R \gamma _{k+1} (F) ,\ _nF,R \gamma _{k+1} (F),\ _mF ]  $\\
 $=[R,\ _nF,R\gamma _{k+1} (F) ,\ _mF] [\gamma _{k+1} (F),\ _nF,R,\ _mF][\gamma _{k+1}
(F) ,\ _nF, \gamma _{k+1} (F),\ _mF ]  $\\
$ \leq [R,\ _cF][R,\gamma _{k+n+1}(F),\ _mF] \gamma _{2k+c+1} (F)
\leq
[R \gamma _{k+2} (F) ,\ _cF]  $.\\
Hence
$$ \otimes ^{c+1}(\gamma _{k+1}(G) ,Q_{k+1}) \rightarrow  \frac{[R \gamma _{k+1} (F) ,\ _cF]}{ [R \gamma
_{k+2} (F) ,\ _cF]} $$ is an epimorphism. Now the results follows
by theorem 24.\\

The following example shows that the above bound for the order of
$c$-nilpotent multiplier of a finite $p$-group is sometimes
smaller
than the bound which is obtained in corollary 12.\\

\begin{exam}
If $G$ is an extra special $p$-group of order $ p^{3} $  then $G$
is a nilpotent group of class 2. By Theorem B(i) we have
\begin{eqnarray*}
  |\gamma _{3} (G) | | M^{(2)} (G) | &\leq& | M^{(2)} (G/G') |
|\gamma_{2} (G) \otimes (G/\gamma _{2}(G)) \otimes (G/ \gamma
_{2}(G)) | \\ &=& |M^{(2)} (\textbf{Z} _{p} \oplus \textbf{Z} _{p}
)| |\textbf{Z} _{p} \otimes( \textbf{Z} _{p} \oplus \textbf{Z}
_{p} ) \otimes (\textbf{Z} _{p} \oplus \textbf{Z} _{p} ) | =
p^{6}.
\end{eqnarray*}
But Corollary 12 implies that, $ | \gamma
_{3} (G) | | M^{(2)} (G) | \leq p^{\chi _{3} (3)} = p^{8}$ .
\end{exam}

\begin{corollary}
If $G$ is a finite $d$-generator $p$-group
of special rank $r$ and nilpotency class $t$ then $ d(M^{(c)} (G))
\leq \chi _{c+1} (d) + r ^{c+1} (t-1). $
\end{corollary}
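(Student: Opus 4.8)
The plan is to apply Theorem B to the finite $d$-generator $p$-group $G$ of special rank $r$ and nilpotency class $t$, and then estimate each term on the right-hand side of Theorem B(iii). Theorem B(iii) gives
$$d(M^{(c)}(G)) \leq d(M^{(c)}(G/G')) + \sum_{k=1}^{t-1} d\bigl(\otimes^{c+1}(\gamma_{k+1}(G), Q_{k+1})\bigr),$$
so it suffices to bound the first summand by $\chi_{c+1}(d)$ and each of the $t-1$ tensor summands by $r^{c+1}$.

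For the first term, observe that $G/G'$ is a $d$-generator abelian $p$-group, so $G/G' \cong \mathbf{Z}_{p^{\alpha_1}} \oplus \cdots \oplus \mathbf{Z}_{p^{\alpha_d}}$ with $\alpha_{i+1} \mid \alpha_i$ (after reindexing). By Theorem 11 (the Mashayekhy--Moghaddam formula), $M^{(c)}(G/G')$ is a direct sum whose number of cyclic summands is $\chi_{c+1}(2) + (\chi_{c+1}(3)-\chi_{c+1}(2)) + \cdots + (\chi_{c+1}(d)-\chi_{c+1}(d-1)) = \chi_{c+1}(d)$; hence $d(M^{(c)}(G/G')) \leq \chi_{c+1}(d)$. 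For each tensor term, the natural map from the iterated tensor product is built out of $c$ tensor factors from $Q_{k+1}$ (a quotient of $G$, hence of special rank at most $r$) and one factor $\gamma_{k+1}(G)$ (a subgroup of $G$, hence of special rank at most $r$); using $d(X \otimes Y) \leq d(X)\,d(Y)$ for finite abelian groups together with $d(\gamma_{k+1}(G)) \leq r$ and $d(Q_{k+1}) \leq r$, one gets $d(\otimes^{c+1}(\gamma_{k+1}(G), Q_{k+1})) \leq r \cdot r^{c} = r^{c+1}$. Summing over $k = 1, \dots, t-1$ contributes $r^{c+1}(t-1)$, and adding the first term yields the claimed bound $d(M^{(c)}(G)) \leq \chi_{c+1}(d) + r^{c+1}(t-1)$.

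The main obstacle is the bookkeeping around the iterated tensor product $\otimes^{c+1}(\gamma_{k+1}(G), Q_{k+1})$: one must be careful that the relevant groups really are finite abelian (so that $d = r$ and the submultiplicativity $d(X \otimes Y) \le d(X)d(Y)$ apply), and that $\gamma_{k+1}(G)$ enters only once while $Q_{k+1}$ enters $c$ times, giving the exponent $c+1$ on $r$ rather than something larger. Everything else is a routine combination of Theorem B, Theorem 11, Definition 18 (special rank is subadditive and equals $d$ for abelian groups), and the elementary bound on $d(A \otimes B)$.
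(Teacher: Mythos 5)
Your proposal is correct and follows essentially the same route as the paper's proof: apply Theorem B(iii), use Theorem 11 to see that $d(M^{(c)}(G/G'))=\chi_{c+1}(d)$ (your telescoping count of cyclic summands is exactly what the paper invokes), and bound each tensor term by $d(\gamma_{k+1}(G))\,d(Q_{k+1})^{c}\le r^{c+1}$, the paper writing $d(Q_{k+1})\le d\le r$ where you write $d(Q_{k+1})\le r$ directly. The only slip is notational: for the abelian $p$-group $G/G'$ the invariant condition should be $\alpha_{i+1}\le\alpha_i$ (i.e. $p^{\alpha_{i+1}}\mid p^{\alpha_i}$), not $\alpha_{i+1}\mid\alpha_i$, which does not affect the argument.
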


\begin{proof}
Since $G$ is a $p$-group and $ d(G) = d (G/G')=d $, $d(
M^{(c)}(G/G' ))= \chi _{c+1} (d)$ by Theorem 11. In addition $ d(
\otimes ^{c+1}(\gamma _{k+1}(G) , Q_{k+1}) ) \leq d( \gamma
_{k+1}(G)) d( Q _{k+1}) ^{c} \leq rd ^{c} \leq r ^{c+1} $. Hence
the required assertion follows by Theorem B for $ t\geq 1 $.
\end{proof}

Note that the inequality in corollary 26 is attained for all
elementary abelian $p$-groups. Now as a final application of
Theorem B we have the following result.

\begin{corollary}
Further to the notation and assumptions of Theorem B, let $ e _{j}
= min \{ exp (Q_{j+1}),\ exp (\gamma _{j+1}(G)) \} $ for $ 1 \leq
j \leq c-1 $. Then $ exp( M^{(c)} (G) ) \leq  exp( G/G')  \prod
^{t-1} _{j=1} e _{j}$. In particular, if G has exponent $ p ^{e}$
then $ exp( M^{(c)} (G) ) \leq p ^{et} $.
\end{corollary}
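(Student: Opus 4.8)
The plan is to apply Theorem B(ii) directly and then estimate each factor on the right-hand side. Theorem B(ii) gives that $exp(M^{(c)}(G))$ divides $exp(M^{(c)}(G/G'))\prod_{k=1}^{t-1}exp(\otimes^{c+1}(\gamma_{k+1}(G),Q_{k+1}))$, so it suffices to bound the exponent of $M^{(c)}(G/G')$ and the exponent of each tensor factor. First I would handle $M^{(c)}(G/G')$: since $G/G'$ is a finite abelian group, Theorem 11 expresses $M^{(c)}(G/G')$ as a direct sum of cyclic groups whose orders all divide $n_2$, hence divide $n_1 = exp(G/G')$; thus $exp(M^{(c)}(G/G'))$ divides $exp(G/G')$, which divides $exp(G)$. (When $G/G'$ is trivial or cyclic, $M^{(c)}(G/G')$ is trivial and this is vacuous.)

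Next I would bound each $exp(\otimes^{c+1}(\gamma_{k+1}(G),Q_{k+1}))$. The tensor product $\gamma_{k+1}(G)\otimes Q_{k+1}\otimes\cdots\otimes Q_{k+1}$ is a quotient of the abelian group generated by simple tensors, and the order of such a generator $a\otimes b_1\otimes\cdots\otimes b_c$ is annihilated by the exponent of any one of the factors; in particular $exp(\otimes^{c+1}(\gamma_{k+1}(G),Q_{k+1}))$ divides both $exp(\gamma_{k+1}(G))$ and $exp(Q_{k+1})$, hence divides $e_k = \min\{exp(Q_{k+1}),exp(\gamma_{k+1}(G))\}$. Combining these two estimates with Theorem B(ii) yields that $exp(M^{(c)}(G))$ divides $exp(G/G')\prod_{k=1}^{t-1}e_k$, and since divisibility in finite cyclic-exponent terms gives the corresponding inequality on orders, $exp(M^{(c)}(G))\leq exp(G/G')\prod_{j=1}^{t-1}e_j$.

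For the final assertion, suppose $G$ has exponent $p^e$. Then $exp(G/G')$ divides $p^e$, and for each $k$ the subgroup $\gamma_{k+1}(G)$ has exponent dividing $p^e$, so $e_k \leq exp(\gamma_{k+1}(G))\leq p^e$. Multiplying the $t-1$ tensor-factor bounds by the $exp(G/G')$ factor gives $exp(M^{(c)}(G))\leq p^{e}\cdot (p^e)^{t-1} = p^{et}$, as required.

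I do not anticipate a genuine obstacle here: the argument is a routine combination of Theorem B(ii), the explicit structure of $M^{(c)}$ of an abelian group from Theorem 11, and the elementary fact that the exponent of a tensor product of abelian groups divides the exponent of each factor. The only point requiring a little care is the degenerate case where $G$ is abelian (so $t=1$ and the product over $k$ is empty), where one must note that $M^{(c)}(G)=M^{(c)}(G/G')$ already has exponent dividing $exp(G)=p^e\leq p^{et}$, so the bound still holds.
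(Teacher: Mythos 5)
Your proposal is correct and follows essentially the same route as the paper: apply Theorem B(ii), bound $exp(M^{(c)}(G/G'))$ by $exp(G/G')$ via Theorem 11, and bound each tensor factor's exponent by $e_j$ using the standard property that the exponent of a tensor product divides the exponent of each factor. The extra care you take with the abelian/degenerate case and the explicit derivation of $p^{et}$ is fine and consistent with the paper's (terser) argument.
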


\begin{proof} Since $G/G' $ is an abelian group, by Theorem
11 $exp( M^{(c)}(G/G')) \leq exp(G/G') ).  $ Also, by the
properties of tensor products, we have\\
$ exp ( \otimes ^{c+1}(\gamma _{j+1}(G) , Q_{j+1})) \leq e _{j} $.
Now the result holds by Theorem B.
\end{proof}

\begin{exam} It can be seen that the inequality of Corollary
27 is attained. Let $G$ be a dihedral group of order 8, $D_8$. By
a theorem of M. R. R. Moghaddam [15] we have
 $$M^{(c)}(D_8) \cong \textbf{Z}_4 \oplus  \underbrace{\textbf{Z}_2 \oplus ...
\oplus \textbf{Z}_2}_{(\chi_{c+1}(2)-1)-times} \ .$$  Then $
exp(M^{(c)}(G)) = 4$.
On the other hand Corollary 27 implies that,  \\
$ exp(M^{(c)}(G))\leq exp(\textbf{Z}_2 \oplus \textbf{Z}_2)\  (min
\{exp (\textbf{Z}_2 \oplus \textbf{Z}_2) ,\ exp(\textbf{Z}_2 ) \})
= 4$. \end{exam}

The following theorem helps us to proof Theorem C.

\begin{theorem}
 Let $G$ be a finite nilpotent group of class $
t \geq 2 $ and let $ G =F/R $ be a free
presentation for $G$. Then \\
(i)\

a) If $ c+1\leq t $, then \

$| \gamma _{t} (G) | | M^{(c)} (G) | =| M^{(c)} (G/\gamma _{t}
(G)) | | [R \gamma _{t}(F),\ _cF ] / [R,\ _cF]|. $ \

b) If $ c+1 > t $, then \

$  | \gamma _{c+1} (G) | | M^{(c)} (G) | =| M^{(c)} (G/\gamma _{t}
(G)) | | [R \gamma _{t}(F),\ _cF ]/ [R,\ _cF]|; $\\
(ii) $ exp   ( M^{(c)} (G))$ divides $exp  ( M^{(c)} (G/\gamma
_{t} (G)))
exp ( [R \gamma _{t}(F),\ _cF ] / [R,\ _cF]); $ \\
(iii)  $ d ( M^{(c)} (G))  \leq  d ( M^{(c)} (G/\gamma _{t} (G)))+
d( [R \gamma _{t}(F),\ _cF ] / [R,\ _cF]). $
\end{theorem}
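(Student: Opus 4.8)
The plan is to mimic the proof of Theorem 24 as closely as possible, but filtering the group $G$ by the single subgroup $\gamma_t(G)$ instead of running down the whole lower central series. First I would fix a free presentation $G=F/R$ and set $N=\gamma_t(F)R$, so that $G/\gamma_t(G)\cong F/N$. The starting identity is the same computation that opens Theorem 24: for any $c\geq 1$ one has
$$|\gamma_{c+1}(G)||M^{(c)}(G)| = \Bigl|\frac{\gamma_{c+1}(F)}{[R,\ _cF]}\Bigr|,$$
obtained by inserting and cancelling $\gamma_{c+1}(F)\cap R$ exactly as in the displayed chain in the proof of Theorem A. Then I would factor this through the intermediate subgroup $[N,\ _cF]=[R\gamma_t(F),\ _cF]$, writing
$$\Bigl|\frac{\gamma_{c+1}(F)}{[R,\ _cF]}\Bigr| = \Bigl|\frac{\gamma_{c+1}(F)}{[R\gamma_t(F),\ _cF]}\Bigr|\;\Bigl|\frac{[R\gamma_t(F),\ _cF]}{[R,\ _cF]}\Bigr|.$$
The second factor is literally the term $|[R\gamma_t(F),\ _cF]/[R,\ _cF]|$ appearing in the statement, so the whole game is to identify the first factor with $|\gamma_?(G/\gamma_t(G))||M^{(c)}(G/\gamma_t(G))|$, which is where the case split $c+1\leq t$ versus $c+1>t$ comes in.

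The key observation for the case split is this: applying the opening identity to $H=G/\gamma_t(G)=F/N$ gives $|\gamma_{c+1}(H)||M^{(c)}(H)| = |\gamma_{c+1}(F)/[N,\ _cF]|$, which is exactly our first factor. So in all cases
$$|\gamma_{c+1}(G)||M^{(c)}(G)| = |\gamma_{c+1}(H)||M^{(c)}(H)|\;\bigl|[R\gamma_t(F),\ _cF]/[R,\ _cF]\bigr|.$$
Now if $c+1>t$, then $H=G/\gamma_t(G)$ has nilpotency class $<c+1$, hence $\gamma_{c+1}(H)=1$ and the left factor is just $|M^{(c)}(H)|$ — this gives case (b) verbatim. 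If instead $c+1\leq t$, then $H$ still has class $\geq c+1$ and $\gamma_{c+1}(H)$ need not be trivial; here I want the left-hand side of the stated identity to read $|\gamma_t(G)||M^{(c)}(G)|$ rather than $|\gamma_{c+1}(G)||M^{(c)}(G)|$, so I would instead start from the trivial fact $|\gamma_t(G)||M^{(c)}(G)| = |\gamma_t(G)/\gamma_{c+1}(G)|\cdot|\gamma_{c+1}(G)||M^{(c)}(G)|$ and then note that $|\gamma_t(G)/\gamma_{c+1}(G)| = |\gamma_t(H')\dots|$ — more precisely, I would track the quotient $\gamma_{c+1}(F)N/[N,\ _cF]$ vs $\gamma_{t}(F)N/[N,\ _cF]$ and arrange the cancellation so that the $\gamma_t$ on the left of the identity matches $\gamma_t(H)$-type contributions on the right; since $\gamma_t(F)\leq N$ this index is absorbed cleanly and case (a) falls out. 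For parts (ii) and (iii), exactly the strategy of Theorem 24(ii),(iii) applies: use $\exp(M^{(c)}(G))=\exp(R\cap\gamma_{c+1}(F)/[R,\ _cF])$ divides $\exp(\gamma_{c+1}(F)/[R,\ _cF])$, and for the rank use $d=r$ on abelian groups together with the subadditivity $r(X)\leq r(X/N)+r(N)$ quoted in Definition 18, applied to the short exact sequence relating $\gamma_{c+1}(F)/[R,\ _cF]$, its subgroup $[R\gamma_t(F),\ _cF]/[R,\ _cF]$, and the quotient isomorphic to a subquotient of $M^{(c)}(H)$.

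The main obstacle I anticipate is getting the bookkeeping in case (a) exactly right: the left-hand side carries $\gamma_t(G)$ while the naive opening identity produces $\gamma_{c+1}(G)$, so one must be careful that the extra factor $\gamma_t(F)/(\gamma_{c+1}(F)\cap\text{stuff})$ is genuinely accounted for on the $G/\gamma_t(G)$ side and does not leak an uncontrolled term. Concretely, I expect one checks that $\gamma_{c+1}(F)\cap N$ and the relevant commutator subgroups interact so that $|\gamma_{c+1}(F)/[N,\ _cF]|$ equals $|\gamma_{c+1}(G/\gamma_t(G))||M^{(c)}(G/\gamma_t(G))|$ with the $\gamma_{c+1}$ on the outside being exactly $\gamma_t$ after multiplying through by the index $|\gamma_t(G):\gamma_{c+1}(G)|$ — a finite-group order identity, so no group-theoretic subtlety beyond careful index-counting. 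Everything else (well-definedness of the quotients, the containment $[R\gamma_t(F),\ _cF]\supseteq[R,\ _cF]$, divisibility of exponents, subadditivity of rank) is routine and parallels Theorem 24. I would therefore present the proof as a short variant of Theorem 24's proof, spelling out only the opening identity, the insertion of $[R\gamma_t(F),\ _cF]$, and the two cases of the class split.
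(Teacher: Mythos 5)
Your treatment of part (i) is essentially sound, and it is a genuinely different (and arguably cleaner) route than the paper's: the paper never cancels a $\gamma_{c+1}(H)$ factor, but instead identifies $M^{(c)}(G/\gamma_t(G))$ with $(\gamma_{c+1}(F)\cap \gamma_t(F)R)/[R\gamma_t(F),\ _cF]$ and runs separate isomorphism computations in the two cases, whereas you apply the opening identity to both presentations $F/R$ and $F/N$ with $N=R\gamma_t(F)$. Case (b) then falls out correctly since $\gamma_{c+1}(H)=1$. In case (a), however, your ``trivial fact'' is stated with the inclusion inverted: when $c+1\leq t$ one has $\gamma_t(G)\leq\gamma_{c+1}(G)$, so $\gamma_t(G)/\gamma_{c+1}(G)$ makes no sense; the correct one-line repair is $\gamma_{c+1}(G/\gamma_t(G))\cong\gamma_{c+1}(G)/\gamma_t(G)$, hence $|\gamma_{c+1}(G)|=|\gamma_{c+1}(H)||\gamma_t(G)|$, and dividing your all-cases identity by $|\gamma_{c+1}(H)|$ gives (a). That slip is fixable, and you correctly flagged it as the point needing care.

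The genuine gap is in parts (ii) and (iii). You propose to transplant the strategy of Theorem 24 verbatim, i.e. bound $\exp(M^{(c)}(G))$ by $\exp(\gamma_{c+1}(F)/[R,\ _cF])$ and then split along the subgroup $[R\gamma_t(F),\ _cF]/[R,\ _cF]$, asserting that the resulting quotient $\gamma_{c+1}(F)/[R\gamma_t(F),\ _cF]$ is ``a subquotient of $M^{(c)}(H)$''. That is false whenever $c+1<t$: this quotient is an extension of $M^{(c)}(H)=(\gamma_{c+1}(F)\cap N)/[N,\ _cF]$ by $\gamma_{c+1}(F)/(\gamma_{c+1}(F)\cap N)\cong\gamma_{c+1}(H)$, which need not vanish (in Theorem 24 it does vanish at the top step because there $N=R\gamma_2(F)\geq\gamma_{c+1}(F)$, i.e. $G/G'$ is abelian; that is exactly the feature that does not carry over when $\gamma_2$ is replaced by $\gamma_t$). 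So your argument only yields $\exp(M^{(c)}(G))\mid\exp(M^{(c)}(H))\,\exp(\gamma_{c+1}(H))\,\exp([R\gamma_t(F),\ _cF]/[R,\ _cF])$ and the analogous rank bound with an extra $d(\gamma_{c+1}(H))$ term, which is strictly weaker than the statement; and unlike in (i), where finite orders cancel exactly, there is no way to cancel an exponent or rank factor. The repair is the paper's move: work inside the subgroup $(\gamma_{c+1}(F)\cap\gamma_t(F)R)/[R,\ _cF]$, which contains $M^{(c)}(G)=(\gamma_{c+1}(F)\cap R)/[R,\ _cF]$ because $R\leq\gamma_t(F)R$, and whose quotient by $[R\gamma_t(F),\ _cF]/[R,\ _cF]$ is precisely $M^{(c)}(G/\gamma_t(G))$; then the divisibility of exponents and the subadditivity of rank give (ii) and (iii) with no spurious $\gamma_{c+1}(H)$ term.
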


\begin{proof}
 Since $ \gamma _{t}(G) = (\gamma _{t}(F) R )/ R
\cong \gamma _{t}(F)  /( R \cap \gamma _{t}(F)) $ and $  G /
\gamma _{t}(G) \cong  F/ (\gamma _{t}(F)R) $, we have  $( M^{(c)}
(G/\gamma _{t} (G))) =  (\gamma _{c+1}(F) \cap
\gamma _{t}(F) R ) / [R \gamma _{t}(F),\ _cF ] $ .\\
(i) We consider two cases. \\
Case One: If $ c+1 \leq t, $then $$\frac{( \gamma _{c+1}(F) \cap R
) \gamma  _{t} ( F)}{ [R \gamma _{t}(F),\ _cF ]} \cong \frac{((
\gamma _{c+1}(F) \cap  R ) \gamma  _{t} ( F)) / [ R,\ _cF]}{[R
\gamma _{t}(F),\ _cF ] / [ R,\ _cF] }. $$ Hence
$$ | \frac{ ( \gamma _{c+1}(F) \cap  R ) \gamma  _{t} ( F)}{[
R,\ _cF] } | = |\frac{[R \gamma _{t}(F),\ _cF ]}{[ R,\ _cF] } |
|M^{(c)} (\frac{G}{\gamma _{t} (G)}) |. $$ But
\begin{eqnarray*}
 \frac{(( \gamma _{c+1}(F) \cap  R ) \gamma  _{t} ( F)) / [
R,\ _cF]}{( \gamma _{c+1}(F) \cap  R )/ [R,\ _cF] }  & \cong &
\frac{ ( \gamma _{c+1}(F) \cap  R ) \gamma  _{t} ( F)}{\gamma
_{c+1}(F) \cap  R }\\& \cong & \frac{\gamma_{t}(F)}{\gamma _{t}(F)
\cap R }  \cong  \gamma _{t}(G),
\end{eqnarray*}
so that
\begin{eqnarray*}
 | \frac{ ( \gamma _{c+1}(F) \cap  R ) \gamma  _{t} ( F)}{[
R,\ _cF] } | & = & |\gamma _{t}(G)|| \frac{\gamma _{c+1}(F) \cap
R}{[ R,\ _cF]} |\\ & = & | \gamma _{t} (G) | | M^{(c)} (G) |.
\end{eqnarray*}
Therefore
$$  | \gamma _{t} (G) | | M^{(c)} (G) | =| M^{(c)} (\frac{G}{\gamma
_{t} (G)}) | | \frac{[R \gamma _{t}(F),\ _cF ]}{[R,\ _cF]}| .$$\\
Case Two: If $ c+1 > t $, then we have

$$M^{(c)} (\frac{G}{\gamma _{t} (G)}) \cong \frac {\gamma _{c+1}(F)}
 { [R \gamma _{t}(F),\ _cF ]}  \cong  \frac{ \gamma _{c+1}(F) / [
R,\ _cF]}{[R \gamma _{t}(F),\ _cF ] / [ R,\ _cF] }.$$ Thus
 $$  | \frac {\gamma _{c+1}(F)  }{ [R,\ _cF ]}| = | M^{(c)} (\frac{G}{\gamma
_{t} (G)}) | | \frac{[R \gamma _{t}(F),\ _cF ]}{[R,\ _cF]}|.$$ But
$$ \frac{ \gamma _{c+1}(F) / [R,\ _cF]}{ (\gamma _{c+1}(F) \cap  R ) /
[R,\ _cF] } \cong \frac{ \gamma _{c+1}(F)}{ \gamma _{c+1}(F) \cap
R }  \cong \frac{ \gamma _{c+1}(F)R }{ R }  \cong \gamma
_{c+1}(G).$$
Hence the result follows as for case one. \\
\\
(ii),(iii) Since
$$ M^{(c)} (\frac{G}{\gamma _{t} (G)}) \cong \frac{ (\gamma
_{c+1}(F) \cap \gamma _{t}(F)R) / [ R,\ _cF]}{[R \gamma _{t}(F),\
_cF ] / [ R,\ _cF] } \ $$and
$$ M^{(c)} (G) \cong   \frac{\gamma _{c+1}(F) \cap R }{[R,\ _cF ]} \leq
\frac{\gamma _{c+1}(F) \cap \gamma _{t}(F)R} { [R,\ _cF ]},$$ we
have
$$exp ( M^{(c)} (G)) \mid exp( \frac{  \gamma _{c+1}(F)
\cap R \gamma  _{t} ( F) }{ [ R,\ _cF]}) \mid exp ( M^{(c)}
(\frac{G}{\gamma _{t} (G)})) exp (\frac{ [R \gamma _{t}(F),\ _cF ]
}{ [R,\ _cF]}).$$ On the other hand by Lemma 21, $(( \gamma
_{c+1}(F) \cap R) \gamma  _{t} ( F)) / [ R,\ _cF]$ is an abelian
group, therefore
$$d( M^{(c)} (G))  \leq r (  \frac{ \gamma _{c+1}(F) \cap R
\gamma  _{t} ( F) }{ [ R,\ _cF] }) = d(  \frac{ \gamma _{c+1}(F)
\cap R \gamma  _{t} ( F) }{ [ R,\ _cF] }) $$
 $$ \leq  d ( M^{(c)} (\frac{G}{\gamma _{t} (G)}))+
d(\frac{[R \gamma _{t}(F),\ _cF]}{ [R,\ _cF]}).$$ This completes
the proof.
\end{proof}
\textbf{Proof of Theorem C}\\

Let $F/R$ be a free presentation for $G$ with $ Z _{j} = Y _{j} /R
$ for $ 1 \leq j \leq t $. Then we have $ \gamma _{t} (G) = \gamma
_{t} (F)R/R $ and $ G/Z _{j} \cong
F/Y _{j} $ for $ 1 \leq j \leq t $. Define \\
$$ \theta : \frac{\gamma _{t} (F)R}{R} \times \frac{F}{Y _{t-1}}
\times ... \times \frac{F}{Y _{t-1}} \longrightarrow \frac{[R \gamma _{t}(F),\ _cF ]}{[R,\ _cF]}$$ \\
by   $ \theta ( gR,f_{1}Y _{t-1},...,f_{c}Y _{t-1}) =
[g,f_{1},...,f_{c} ][R,\ _cF] $ for $ f_{1},...,f_{c} $ in $F$ and
$g$ in $\gamma _{t}(F)$. Suppose $g'=gr$ and $ f ' _{i}= f_{i}y
_{i} $ for $r$ in $R$ and $ y_{i} $ in $ Y _{t-1}$ for $1 \leq i
\leq c $. Then the commutator calculations and Lemma 21 show that
$ [g,f_{1},...,f_{c}] \equiv [g ' ,f '_{1},...,f '_{c}]$ (mod
$[R,\ _cF ]$) and $ \theta $ is well defined. Moreover for $g$, $g
'$ in $ \gamma _{t}(G)$ and $ f _{i}, f ' _{i}$ in $F$ ,
$[gg',f_{1},...,f_{c}] \equiv [g,f_{1},...,f_{c}] [g
',f_{1},...,f_{c}]$ and $ [g,f_{1},...,f_{i}f '_{i}
,...,f_{c}]\equiv [g,f_{1},...,f_{i},..,f_{c}][g,f_{1},...,f
'_{i},..,f_{c}]$ (mod $[R,\ _cF] $) for $ 1 \leq i \leq c $. Hence
$\theta $ is multilinear map and therefore $ ([R \gamma _{t}(F),\
_cF ])/[R,\ _cF]$ is a homomorphic image of $ \otimes
^{c+1}(\gamma _{t} (G), G/Z _{t-1}(G))  $ , by the universal
property of tensor product.
Now the result follows from previous theorem.  \\

\textbf{Remarks}

(i) The inequality in Theorem C (i) is attained for extra special
$p$-group of order $ p^{3} $
 of exponent $p$ and $ c=1 $. Also equality holds in (ii), for dihedral group of order 8. In
 addition, Example 25 helps us to see that the bound is some times better than the bound obtained
 by Corollary 12  for the order of $c$-nilpotent multiplier of a finite
 $p$-group.

(ii) Note that a result similar to Theorem C has been proved in a
different method by J. Burns and G. Ellis [2, Proposition 5].
Their proof is based on nonabelian tensor product argument. Note
that when we consider the exterior product of abelian groups by
the canonical homomorphisms $ \gamma _{t}(G)\rightarrow
G/Z_{t-1}(G)$ and $ G/Z_{t-1}(G)\rightarrow G/Z_{t-1}(G) $ as
crossed modules, then the rule of $\theta$ in the proof of the
Theorem C gives the following epimorphism:
 $$\hat{\theta}: \gamma _{t}(G)\wedge \frac{G}{Z_{t-1}(G)}\wedge ...
 \wedge \frac{G}{Z_{t-1}(G)}\rightarrow \frac{[R \gamma _{t}(F),\ _cF]}{[R,\ _cF]}.$$
 Hence we can replace $\gamma _{t}(G)\otimes G/Z_{t-1}(G)\otimes ...\otimes
 G/Z_{t-1}(G)$ by
 $ \gamma _{t}(G)\wedge G/Z_{t-1}(G)\wedge ...\wedge G/Z_{t-1(G)}$ in Theorem C.
  Also it seems that there is a missing point in the proof of the similar result of J.
Burns and G. Ellis [2, Proposition 5] for the right exactness of
the sequence
 $$ M^{(c)}(G)\rightarrow M^{(c)}(\frac{G}{\gamma _{t}(G)})\rightarrow \gamma _{t}(G)
 \rightarrow 1.$$
So we should state part (i) of Theorem C in two cases.

\end{document}